\documentclass[twocolumn]{autart}    % Enable this line and disable the 

\usepackage{graphicx}          % Include this line if your 

\usepackage{amsmath, amssymb}
\usepackage{xcolor}
\usepackage{graphicx}
\usepackage{float}
\usepackage{placeins}

\usepackage{algorithm}     % 算法浮动体
\usepackage{algorithmicx}  % 算法伪代码（推荐，兼容性好）
\usepackage[noend]{algpseudocode}  % 或者使用这个
\newtheorem{problem}{Problem}
\newtheorem{lemma}{Lemma}
\newtheorem{remark}{Remark}
\newtheorem{theorem}{Theorem}

\begin{document}
	
	\begin{frontmatter}
		
		\title{Distributed Algorithm for the Global Optimal Controller of Nonlinear Multi-Agent Systems} 
		
		\thanks[footnoteinfo]{This work was supported by the National Natural Science Foundation of
			China under Grants 62573262, 62503289 and the Natural Science Foundation
			of Shandong Province under Grant ZR2021JQ24.\\
			Corresponding author Juanjuan Xu. 
		}
		
		\author[CSE]{Ruixue Li}\ead{ruixueli777@163.com},   
		\author[CSE]{Wenjing Yang}\ead{yangwenjing1024@163.com},   
		\author[CST]{Zhaorong Zhang}\ead{zhangzr@sdu.edu.cn},             
		\author[HK]{Xun Li}\ead{li.xun@polyu.edu.hk},
		\author[CSE]{Juanjuan Xu}\ead{juanjuanxu@sdu.edu.cn}
		
		\address[CSE]{School of Control Science and Engineering, Shandong University, Jinan, China}                                            
		\address[CST]{School of Computer Science and Technology, Shandong University, Qingdao, China}            
		\address[HK]{Department of Applied Mathematics, The Hong Kong Polytechnic University, Hong Kong, China}     
		
		\begin{keyword}                           % Five to ten keywords,  
			Nonlinear multi-agent systems; Distributed algorithm; Optimal control; Information structure.                            % chosen from the IFAC 
		\end{keyword}                             % keyword list or with the 
		% help of the Automatica 
		% keyword wizard
		
		\begin{abstract} % Abstract of not more than 200 words.
			In this paper, we investigate the distributed optimal control problem for a kind of nonlinear multi-agent systems.
			In particular, both the state and the system dynamic structures of each agent are private and can only be shared among communicating agents. This type of information structure is inevitable in fields such as collaborative control for industrial confidentiality, and renders traditional distributed control methods using all systems' dynamic structures ineffective. 
			The primary contribution is the proposal of a distributed algorithm for the global optimal controller under such practical information structure via distributed approximation of the Hamilton-Jacobi-Bellman equation.
			Practical numerical simulation demonstrates the effectiveness of the proposed algorithm.
		\end{abstract}
		
	\end{frontmatter}

	\section{Introduction}
	Owing to its unique ability to accurately characterize and handle the pervasive nonlinear dynamic characteristics in practical engineering systems \cite{1}, nonlinear optimal control plays an irreplaceable role in a host of critical engineering fields. Typical applications include the high-precision trajectory tracking of aerospace vehicles, autonomous navigation of intelligent mobile platforms, and optimal dispatching of renewable energy grids \cite{2MAS}.

	Unlike linear optimal control problems that can be solved to yield rigorous analytical solutions via the Riccati equation, nonlinear optimal control faces severe theoretical and computational challenges due to the requirement of solving the Hamilton-Jacobi-Bellman (HJB) equation \cite{BrysonHo1975}, which rarely has an analytical solution \cite{VI}. 	
	In light of this, extensive research efforts have been devoted to developing  strategies for nonlinear optimal control problems.
	\cite{non4}
	proposed a numerically efficient discrete-time algorithm for nonlinear optimal control with quadratic criteria.
	\cite{non2}
	investigated the fundamentals of the control parameterization method and reviewed its various applications to the non-standard optimal control problems.
	\cite{non3}
	addressed the discrete-time infinite-horizon optimal control problems with a terminal state set constraint.
	\cite{non2019}
	addressed nonlinear optimal control problems using a data-driven framework that combines indirect methods with k-nearest neighbors retrieval and sensitivity analysis.
	\cite{non1}
	studied a continuous-time value iteration method that will be applied to address the adaptive or nonadaptive optimal control problems for continuous-time systems described by differential equations. 
	\cite{non6}
	investigated finite-horizon constrained robust optimal control problem  affected by norm-bounded disturbances.

	Despite notable advances in nonlinear optimal control research, the vast majority of existing solution frameworks adopt a centralized architecture for optimal controller design, which relies on global system information and thus inherently suffers from single-point failure vulnerability, and communication bandwidth bottleneck \cite{6bandwidth,7risk}. These intrinsic characteristics severely impede the scalability of centralized methods in large scale engineering systems and render them infeasible for practical scenarios with stringent communication constraints and information privacy requirements.

	On the contrary, distributed control approaches possess distinct advantages, including strong adaptability, relaxed system requirements, high robustness, and flexible scalability \cite{disadvant}, which render them particularly well-suited for systems constrained by topological structures and large-scale network subsystems \cite{DisMAS}. For these compelling reasons, distributed methods have emerged as an increasingly desirable alternative for addressing complex control problems \cite{moreresearch}.
	For instance, 
	\cite{dis1} presented a linear quadratic (LQ) optimal control-based approach to solve the multi-agent systems consensus problem by minimizing the weighted state error with neighboring agents.
	\cite{dis2} investigated the optimal consensus control problem for continuous-time multi-agent systems via a distributed algorithm based on the alternating direction method of multipliers.	
	\cite{dis3} studied an inverse optimality approach combined with partial stability to address cooperative consensus and pinning control for agents with identical linear time-invariant dynamics and fixed directed communication topologies.
	\cite{dis4} investigated the structural properties of infinite-horizon LQ optimal control problems by analyzing the spatial structure of solutions to the associated operator Lyapunov and Riccati equations.
	\cite{Dissolving}
	addressed the LQ optimal control problem for continuous-time systems with terminal state constraints.

	However, all the above studies focus on the distributed optimal control of linear systems. Most practical multi-agent systems exhibit inherent nonlinear characteristics, and linearization approximations often introduce unignorable errors that degrade control performance or even lead to system instability under complex operating conditions. Therefore, the research on distributed optimal control for nonlinear multi-agent systems is of great theoretical and engineering value.

	In this paper, we focus on the distributed optimal control problem for nonlinear multi-agent systems, where each agent is governed by affine dynamics and subject to the practical significant information structure, under which each agent can only utilize its own and its communication neighbors’ states and system dynamic structure.
	The main contribution of this paper lies in the design of a distributed algorithm for the global optimal controller via distributed approximation of the HJB equation. 
	Specifically, under the information structure constraint, the proposed algorithm enables each agent to construct the distributed HJB equation using only the accessible information within its own information structure. 
	Practical numerical simulation validates the effectiveness of
	the proposed algorithm.

	The remainder of this paper is outlined as follows. Section \ref{sec2} gives preliminary knowledge and formulates the problem to be addressed. 
	Section \ref{sec3} presents the global optimal solution to the optimal control problem using global information. Section \ref{sec4} introduces the distributed algorithm for approximating the global optimum under the considered information structure.
	Practical numerical simulation is given in Section \ref{sec5} to verify the effectiveness of the proposed algorithm. 
	Finally, detailed proofs are relegated to the Appendix for the readability of the main text.
	
	The notation used in this paper is listed below. $\mathbb{R}^n$ denotes the set of $n$-dimensional real vectors. $\mathbb{R}^{n \times m}$ denotes the set of $n\times m$-dimensional real matrices. $Z'$ represents the transposed matrix of matrix $Z$. $\left \| \cdot \right \|$ is the vector norm on the Euclidean space. $\nabla_xV $ denotes the gradient of the scalar field 
	$V$ with respect to the variable $x$.
	The communication topology among $N$ agents is modeled by an undirected graph $\mathcal{G} = (\mathcal{V}, \mathcal{E}, \mathcal{A})$, where $\mathcal{V} = \{1, 2, \dots, N\}$ represents the agent set, $\mathcal{E} \subseteq \mathcal{V} \times \mathcal{V}$ denotes the communication links, and $\mathcal{A} = (a_{ij}) \in \mathbb{R}^{N \times N}$ is the weighted adjacency matrix with $a_{ii} = 0$ and $a_{ij} > 0$ if $(i,j) \in \mathcal{E}$ where $i \neq j$.
	 The degree of agent $i$ is given by $d_i = \sum_{j=1}^N a_{ij}$, with the degree matrix $\mathcal{D} = \text{diag}\{d_1, \dots, d_N\}$ and Laplacian matrix $\mathcal{L} = \mathcal{D} - \mathcal{A}$ characterizing the graph connectivity. Within this framework, each agent $i$ communicates exclusively with its neighbors defined as $\mathcal{N}_i = \{ j \in \mathcal{V} \mid (i, j) \in \mathcal{E} \}$.

	\section{Problem formulation }	\label{sec2}

	In this section, we first introduce the class of affine nonlinear multi-agent systems under investigation. Then, accounting for the private nature of information interaction among agents, we define a practically meaningful information structure to precisely characterize the accessible information for distributed optimal controller design, based on which we formally formulate the problem to be addressed.

	Consider a kind of multi-agent systems consisting of $N$ affine nonlinear agents, in which the dynamics of the $i^{th}$ agent is described as
	\begin{align} \label{system_agent}
		\dot{x}_i(t)=f_i(x_i(t))+g_i(x_i(t))u_i(t),
	\end{align}
	with the initial state $x_i(0) = x_{i0}$,
	where $x_i(t) \in \mathbb{R}^n $,  $u_i(t) \in \mathbb{R}^m $ are the state and control input of agent $i$, respectively, $f_i$, $g_i$ are continuously differentiable functions, and $ t \in [0, T]$ represents the continuous-time variable.

Such affine nonlinear multi-agent systems have been widely applied in practical engineering scenarios \cite{qUAV,multi-joint manipulator,ASV}. 
Notably, the dynamic properties of each agent $i$ are determined by the local system dynamic structures $f_i$ and $g_i$. 
%In consideration of agent heterogeneity as well as the demand for security and privacy preservation,
In consideration of the demand for security and privacy preservation,
 the state $x_i$, the control input $u_i$ and system dynamic structures $f_i$, $g_i$ are all regarded as private information of agent $i$.

Moreover, to capture the global optimization objective of the considered multi-agent system, we introduce the following global quadratic performance index
\begin{align} \label{costfun}
	\begin{split}
		J=\int_{0}^{T} \frac{1}{2}[x'(t)Qx(t)+ u'(t)Ru(t)] dt,
	\end{split}
\end{align}
where $x(t)=[x_1'(t),x_2'(t),\cdots,x_N'(t)]'$ is the augmented state, $u(t)=[u_1'(t),u_2'(t),\cdots,u_N'(t)]'$ denotes the augmented control, $Q$ is positive semi-definite matrices, $R$ is positive-definite matrix, all with compatible dimensions.
	In particular, the weighting matrices in this paper can be either diagonal or dependent on the communication topology, i.e., $Q=(c_{ij}Q_{ij})$ and $R=(c_{ij}R_{ij})$, where $c_{ii}=1$, $c_{ij}=1$ if $i$ and $j$ communicate, and $c_{ij}=0$ otherwise. In this case, for each agent $i$, the available information is $\tilde{Q}_i = diag\{0,\ldots,0,NI,0,\ldots,0\}Q$ and $\tilde{R}_i = diag\{0,\ldots,0,NI,0,\ldots,0\}R$,
		which are treated as private information.
	
	Based on the above formulation, we provide a rigorous mathematical description of the privacy requirements under dynamics and local weighting matrices by defining the private information set of agent $i$ as
\begin{align*}
	\mathcal{I}_i(t) = \{x_i(\tau), f_i, g_i, \tilde{Q}_i, \tilde{R}_i \mid \tau \le t \},
\end{align*}
which is accessible only to agent $i$ and can be shared with its communicating neighbors, but unknown to all other non-neighboring agents.
Accordingly, we have the following structure:

	\textbf{Information Structure:}
	For agent $i$, its available information set $\mathcal{S}_i(t)$ is characterized by
	\begin{align*}
		\mathcal{S}_i(t) = \mathcal{I}_i(t) \cup  {\textstyle \bigcup_{j \in \mathcal{N}_i}} \mathcal{I}_j(t).
	\end{align*}

Building on the above preliminaries, the objective of this paper is to address the following problem:
	\begin{problem}
		Under the constraint $u_i(t)\in \mathcal{U}(\mathcal{S}_i(t))$, which means each agent only utilizes information from its own information structure $\mathcal{S}_i(t)$, 
		design a distributed algorithm for each agent $i$ to minimize the global cost function \eqref{costfun} subject to the system \eqref{system_agent}.
	\end{problem}

	\begin{remark}
    The information structure is grounded in practical scenarios and exhibits considerable engineering significance.
     For instance, in sensitive scenarios consisting of multiple unmanned ground vehicles (UGVs) as showned in Fig. \ref{fig:communication}, such as military UGV formations, the real-time position, physical configuration, intended destination, and planned trajectory of each vehicle should be maintained as strictly confidential information.
Unauthorized access or improper information sharing may expose operational intentions and even render the entire system vulnerable to malicious attacks. Therefore, the privacy and security of the private information structure of each agent must be guaranteed.

    \FloatBarrier			
    \begin{figure}[H]
    	\centering		
    	\includegraphics[width=0.47\textwidth, keepaspectratio]{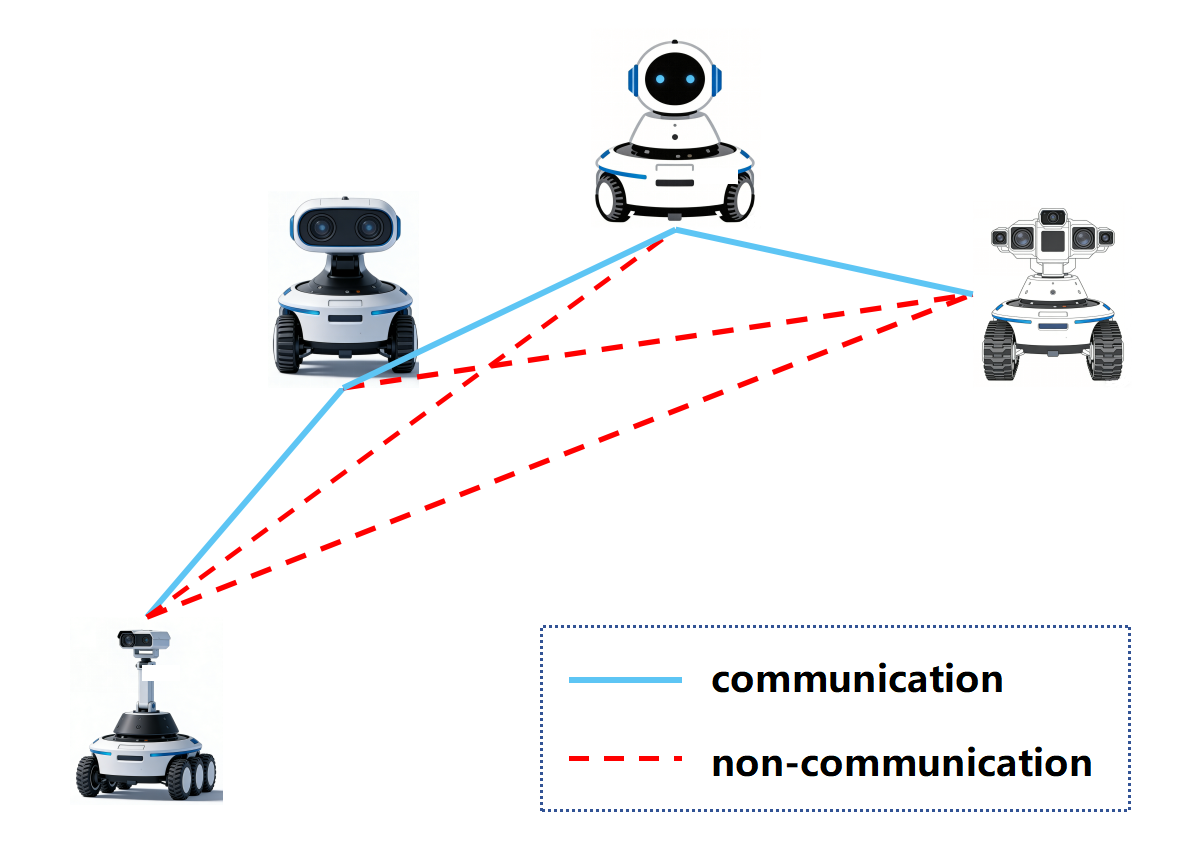} 
    	\caption{Privacy-aware communication in UGV formation. } 
    	\label{fig:communication}
    \end{figure}	
    \FloatBarrier
    		
    \end{remark}

\begin{remark}
	
The global quadratic performance index \eqref{costfun} covers various practical control objectives for multi-agent systems. Three typical cases are discussed below.

(1) Formation consensus: 
The standard consensus-oriented index is $J_1 = \int_{0}^{T} \Sigma_{i,j}(x_i(t)-x_j(t))'Q_{ij}(x_i(t)-x_j(t))dt$, 
where $Q_{ij}$ penalizes state discrepancies between agents to achieve coordination.

(2) Energy efficiency: 
The typical energy-minimization index is $	J_2 =  \int_{0}^{T} u'(t)Ru(t)dt$, 
where $R$ is usually chosen as a diagonal matrix that penalizes the magnitude of each control input to reduce energy consumption.

(3) Collision avoidance: 
The representative safety index is $J_3 =  \int_{0}^{T} \Sigma_{i,j}(d_{ij}(t)-d_0(t))'Q_{ij}(d_{ij}(t)-d_0(t))dt$, 
where $d_{ij}$ denotes the relative distance between the position components extracted from the states $x_i$ and $x_j$, and $Q_{ij}$ penalizes distances smaller than the safe value $d_{0}$.
\end{remark}

\begin{remark}
	It is well known that distributed solving of optimal control problems even in the linear case is NP-hard \cite{NPhard}.
   For the nonlinear optimal control problem, finding an optimal solution is much more difficult.
 To this end, we will propose a novel distributed algorithm to approximate the global optimal controller of nonlinear multi-agent systems.  
\end{remark}

\section{Preliminaries on global optimal controller}	\label{sec3}
	
	In this section, we focus on the design of the global optimal controller that minimizes the cost function \eqref{costfun} subject to the considered nonlinear multi-agent system \eqref{system_agent} under the assumption that all agents possess complete system information. Starting from the objective of minimizing the predefined global cost function, we present the global optimal control policy by solving the HJB equation.
	The theoretical results herein form the foundation for the subsequent design of distributed algorithm under the partial information structure. 
	
	We first present the system dynamics under the full-information assumption, and then derive the corresponding control input. Specifically, the dynamics of the augmented state $x(t)$ is described as:
	\begin{align}
		\dot{x}(t)=f(x(t))+g(x(t))u(t), \ \  x(0)=x_0,\label{system_0}
	\end{align}
	with 	\begin{align*}
		f(x(t))=&\frac{1}{N}\sum_{i=1}^{N} \mathfrak{f}_i(x_i(t)),\\
		g(x(t))=&\frac{1}{N}\sum_{i=1}^{N} \mathfrak{g}_i(x_i(t)),\\
		x_0=&\frac{1}{N}\sum_{i=1}^{N}\mathfrak{x}_{i0},  
	\end{align*}
	where 
	\begin{align*}
		&\mathfrak{f}_i(x_i(t))=\begin{pmatrix}
			0 \\
			\vdots  \\
			Nf_i(x_i(t))\\
			\vdots\\
			0
		\end{pmatrix},   	
		\mathfrak{x}_{i0}=\begin{pmatrix}
			0 \\
			\vdots  \\
			Nx_{i0}\\
			\vdots\\
			0
		\end{pmatrix},\\
		&\mathfrak{g}_i(x_i(t))=\begin{pmatrix}
			0 &  &  &  & \\
			&  \ddots &  &  & \\
			&  & Ng_i(x_i(t))&  & \\
			&  &  & \ddots  & \\
			&  &  &  &0
		\end{pmatrix}.
	\end{align*}

	At this point, for the given system \eqref{system_0}, the core of solving the nonlinear optimal control problem lies in deriving and solving the corresponding HJB equation:
	\begin{align} \label{HJB}
		\frac{\partial V(t,x)}{\partial t} + \min_{u}H(x,\nabla_xV,u(t,x))=0,
	\end{align}
	with the terminal condition $V(T,x) = 0$, where the Hamiltonian $H$ is given by:
	\begin{align*} 
		\begin{split}
			H(x,\nabla_xV,u(t,x))=&[\nabla_xV(t,x)]' F(t,x)+l(t,x),
		\end{split}
	\end{align*}
	while 
	\begin{align*}
		F(t,x)& = f(x)+g(x)u(t,x),\\
		l(t,x)&= \frac{1}{2}x'(t)Qx(t)+\frac{1}{2}u'(t,x)Ru(t,x).
	\end{align*}
		
	For the analysis that follows, we assume throughout this paper that the HJB equation \eqref{HJB} admits a continuously differentiable solution, as noted in \cite{C1}.

	\begin{lemma} \label{lemma1}
		The globally optimal controller that minimizes the cost function \eqref{costfun} subject to \eqref{system_0} with $x(0)=x_0$ is given by:
		\begin{align}\label{G_control}
			u(t,x)=-R^{-1}g'(x) \nabla_xV(t,x).
		\end{align}	
	\end{lemma}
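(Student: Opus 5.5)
The plan has two parts: first minimize the Hamiltonian pointwise to get the candidate feedback \eqref{G_control}, then run a verification argument to show this feedback is globally optimal.

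\textbf{Step 1: pointwise minimization of $H$.} Fix $(t,x)$ and write $p=\nabla_xV(t,x)$. The Hamiltonian
\begin{align*}
H(x,p,u)=p'f(x)+p'g(x)u+\tfrac12 x'Qx+\tfrac12 u'Ru
\end{align*}
is a strictly convex quadratic function of $u$, because $R$ is positive definite. Its unique minimizer is therefore given by the stationarity condition $\partial H/\partial u=g'(x)p+Ru=0$. This yields $u^*=-R^{-1}g'(x)\nabla_xV(t,x)$, which is exactly \eqref{G_control}. Completing the square gives, for every admissible $u$,
\begin{align*}
H(x,p,u)=H(x,p,u^*)+\tfrac12 (u-u^*)'R(u-u^*).
\end{align*}
Using the standing assumption that $V$ is a $C^1$ solution of \eqref{HJB} with $V(T,x)=0$, we then have $\partial_t V+H(x,\nabla_xV,u^*)=0$ along any state.

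\textbf{Step 2: verification.} Take an arbitrary admissible control $u(\cdot)$ and let $x(\cdot)$ be the corresponding trajectory of \eqref{system_0} from $x_0$. Differentiating $V(t,x(t))$ along the trajectory and using $V\in C^1$ gives
\begin{align*}
\tfrac{d}{dt}V=\partial_tV+(\nabla_xV)'(f+gu)=\partial_tV+H(x,\nabla_xV,u)-l.
\end{align*}
Substituting the completed-square identity and the HJB equation, this becomes
\begin{align*}
\tfrac{d}{dt}V=-l+\tfrac12(u-u^*)'R(u-u^*).
\end{align*}
Integrating from $0$ to $T$ and using $V(T,\cdot)=0$ gives
\begin{align*}
J(u)=V(0,x_0)+\tfrac12\int_0^T (u-u^*)'R(u-u^*)\,dt.
\end{align*}
Since the integrand is nonnegative, $J(u)\ge V(0,x_0)$, with equality exactly when $u=u^*$ almost everywhere. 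This proves that \eqref{G_control} is globally optimal and that $V(0,x_0)$ is the optimal cost.

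\textbf{Main obstacle.} The delicate step is admissibility of the feedback $u^*(t,x)$: the closed-loop equation
\begin{align*}
\dot x=f(x)-g(x)R^{-1}g'(x)\nabla_xV(t,x)
\end{align*}
must have a solution on $[0,T]$ so that the equality case is attained. I would handle this by noting that $f$ and $g$ are $C^1$, so the right-hand side is continuous and Peano's theorem gives local existence. Finite-time blow-up is then excluded by the identity above: along the closed-loop trajectory, $\int_0^t l\,ds\le V(0,x_0)-V(t,x(t))$, which keeps the trajectory bounded under the regularity of $V$. If this is too fragile, I would instead assume well-posedness of the closed loop, in the same spirit as the paper's standing assumption that \eqref{HJB} admits a $C^1$ solution.
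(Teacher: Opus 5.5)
Your completing-the-square identity $H(x,p,u)=H(x,p,u^*)+\tfrac12(u-u^*)'R(u-u^*)$ followed by integration of $\tfrac{d}{dt}V(t,x(t))$ is correct, and it is the standard HJB verification argument that the paper does not carry out itself but takes from its value-iteration reference. The one step that fails as written is your exclusion of finite escape time: the bound $\int_0^t l\,ds \le V(0,x_0)-V(t,x(t))$ does not control $x(t)$ pointwise, because $Q$ is only positive semidefinite, $f$ is merely $C^1$ and may grow superlinearly, an integral bound on the state still allows blow-up, and $V$ is not known a priori to be bounded below; so adopt your fallback and assume the closed-loop equation has a solution on $[0,T]$, which is the usual hypothesis of the verification theorem.
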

	\textit{\textbf{Proof.}} The proof is referred to \cite{VI} and omitted herein.	
	\hfill $\blacksquare$

	From Lemma 1, the design of optimal controller \eqref{G_control} relies on the solution $V(t,x)$ to the HJB equation \eqref{HJB}, i.e.,
		\begin{align}
			\frac{\partial V(t,x)}{\partial t} +[\nabla_xV(t,x)]' [ f(x)-g(x)R^{-1}g'(x)	\nonumber \\ \times \nabla_xV(t,x)   ]
			+ [\frac{1}{2}x'(t)Qx(t)+\frac{1}{2} (\nabla_xV(t,x))'	\nonumber \\ \times g(x)  R^{-1}g'(x) \nabla_xV(t,x) ] =0.	
	\end{align}
	
	In the following lemma, we present a solution scheme adopted from the method developed in \cite{VI} to address this partial differential equation (PDE), which will be useful in the design of the distributed algorithm.

	\begin{lemma}[Value Iteration]\label{lemma2}
		Under the given initialization $V^{0}(t,x) = 0$ and the terminal condition $V^{k+1}(T,x) = 0$, the value iteration
		\begin{align} \label{linear PDE}
				\frac{\partial V^{k+1}(t,x)}{\partial t} + [\nabla_xV^{k+1}(t,x)]' F^{k}(t,x)\nonumber  \\
				+l^{k}(t,x)=0,
		\end{align}
		with
		\begin{align}
			u^{k}(t,x)&= -R^{-1}g'(x)\nabla_xV^{k}(t,x), \label{uk}\\
			F^{k}(t,x)&=f(x)+g(x)u^{k}(t,x), \label{Fk}\\
			l^{k}(t,x)&=\frac{1}{2}x'(t)Qx(t)+\frac{1}{2}(u^k(t,x))'Ru^k(t,x), \label{lk}
		\end{align}
		yields a sequence $\{V^{k+1}(t,x)\} $ that satisfies
		\begin{align}
			\lim_{k \to \infty} V^{k+1}(t,x)=V(t,x).	
		\end{align}
		\end{lemma}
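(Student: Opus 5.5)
The plan is to read each step of the value iteration as a policy evaluation and use the method of characteristics. Since \eqref{linear PDE} is a linear first-order PDE in $V^{k+1}$ with terminal data $V^{k+1}(T,x)=0$, integrating it along the closed-loop trajectory $\dot{x}=F^{k}(s,x)$, $x(t)=x$, gives the representation
\begin{align*}
V^{k+1}(t,x)=\int_t^T l^{k}(s,x^{k}(s))\,ds .
\end{align*}
This makes sense as long as the feedback $u^{k}$ from \eqref{uk} is locally Lipschitz, so that the trajectory exists on $[t,T]$. Each $V^{k+1}$ is therefore a well-defined, continuously differentiable function, and $V^{k+1}\ge 0$ because $Q\ge 0$ and $R>0$.

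Next I would prove that the sequence is monotone. Set $W^{k}=V^{k+1}-V^{k}$ and subtract consecutive instances of \eqref{linear PDE}. Using \eqref{uk}--\eqref{lk}, write $g'\nabla_x V^{k}=-Ru^{k}$. The cross terms then complete a square, and the difference satisfies
\begin{align*}
\frac{\partial W^{k}}{\partial t}+[\nabla_x W^{k}]'F^{k}+\tfrac12 (u^{k}-u^{k-1})'R(u^{k}-u^{k-1})\cdot\sigma=0,
\end{align*}
with a definite sign $\sigma$. Integrating along the characteristics of $F^{k}$ with $W^{k}(T,\cdot)=0$ then gives a one-signed $W^{k}$. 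I expect this to be the main obstacle. In value iteration started at $V^{0}=0$ the natural ordering is $V^{k}\le V^{k+1}$, but it has to come out of a comparison argument with a PDE whose source term mixes two policies. If the sign does not close directly, I would fall back to an induction, comparing $V^{k+1}$ both with $V^{k}$ and with the optimal $V$.

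For the upper bound, I would show $V^{k}\le V$ for all $k$ by a similar comparison. The true $V$ solves \eqref{HJB}, and $u=-R^{-1}g'\nabla_xV$ minimizes the Hamiltonian by Lemma \ref{lemma1}. Subtracting the HJB equation from \eqref{linear PDE} and using $\min_u H\le H(\cdot,u^{k})$ yields a differential inequality for $V^{k+1}-V$ along the trajectories of $F^{k}$, and integrating it from the zero terminal condition gives the bound. A monotone sequence bounded by $V$ converges pointwise to some $V^{\infty}\le V$.

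Finally, I would identify the limit. I would establish local uniform bounds on $\nabla_xV^{k}$ on compact sets, using the smoothness of $f$ and $g$ and the uniform bound on $V^{k}$ on $[0,T]$. A compactness argument such as Arzel\`a--Ascoli then lets me pass to the limit in \eqref{linear PDE}, possibly in the viscosity sense. The limit $V^{\infty}$ satisfies the HJB equation \eqref{HJB} with $V^{\infty}(T,x)=0$. Uniqueness of the continuously differentiable solution, assumed as in \cite{C1}, or a verification argument using the integral representation, then gives $V^{\infty}=V$. This follows the framework of \cite{VI}, and I would cite it for the technical regularity estimates.
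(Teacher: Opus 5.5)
The two inequalities at the core of your argument point the wrong way, and the bound $V^{k}\le V$ you plan to prove is false. Your own representation $V^{k+1}(t,x)=\int_t^T l^{k}(s,x^{k}(s))\,ds$ says that $V^{k+1}$ is the cost of the admissible feedback $u^{k}$ started at $(t,x)$. Hence $V^{k+1}\ge V$ for every $k\ge 0$, and an upper bound would force $V^{k+1}=V$, i.e.\ $u^{k}$ would already be optimal. For example, for $\dot x=u$ with cost $\frac12\int_t^T(x^2+u^2)\,ds$ one gets $V^{1}=\frac12(T-t)x^2>\frac12\tanh(T-t)\,x^2=V$.

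The comparison you outline in fact proves the lower bound. Since $\partial_tV+[\nabla_xV]'F^{k}+l^{k}=\partial_tV+H(x,\nabla_xV,u^{k})\ge 0$, the difference $D=V^{k+1}-V$ satisfies $\partial_tD+[\nabla_xD]'F^{k}\le 0$. So $D$ is nonincreasing along trajectories of $F^{k}$, and because $D(T,\cdot)=0$ it is nonnegative. Likewise, your completion of squares does close, with $\sigma=-1$:
\begin{align*}
\frac{\partial W^{k}}{\partial t}+[\nabla_xW^{k}]'F^{k}=\tfrac12(u^{k}-u^{k-1})'R(u^{k}-u^{k-1}),\qquad k\ge 1.
\end{align*}
Hence $W^{k}(t,x)=-\frac12\int_t^T\|R^{1/2}(u^{k}-u^{k-1})\|^2ds\le 0$, with the integrand taken along the trajectory of $F^{k}$ from $(t,x)$. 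Thus $V^{k+1}\le V^{k}$ for $k\ge1$, and only the first step $V^{1}\ge V^{0}=0$ goes up. Despite its name, the scheme is a finite-horizon policy iteration: each step evaluates the full cost of $u^{k}$. It therefore approaches $V$ from above, not from below as discrete-time value iteration from $V^{0}=0$ does.

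Once the directions are corrected you get $V^{k}\downarrow V^{\infty}\ge V$, and everything rests on the last step, which must now show $V^{\infty}\le V$. Your sketch of that step is not enough. Gradient bounds (which do not follow from bounds on $V^{k}$ alone) plus Arzel\`a--Ascoli give convergence of $V^{k}$, not of $\nabla_xV^{k}$. But \eqref{linear PDE} couples $\nabla_xV^{k+1}$ with $F^{k}$, which contains $\nabla_xV^{k}$; for the same reason viscosity stability does not apply directly. Writing $H^{*}(x,p)=\min_uH(x,p,u)$, the equation for $V^{k+1}$ is
\begin{align*}
\frac{\partial V^{k+1}}{\partial t}+H^{*}(x,\nabla_xV^{k+1})+\tfrac12\|R^{1/2}(u^{k+1}-u^{k})\|^2=0,
\end{align*}
so you must show that this defect vanishes in the limit. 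The identity $V^{k}-V^{k+1}=\frac12\int_t^T\|R^{1/2}(u^{k}-u^{k-1})\|^2ds\to0$ controls its time integral along trajectories. Turning that into locally uniform control, however, requires equicontinuity estimates you have not supplied; the paper sidesteps all of this by citing Theorem 1 of \cite{VI}.

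Finally, with $f,g$ only $C^1$, $u^{1}=-R^{-1}g'\nabla_xV^{1}$ is merely continuous. So for $k\ge1$ neither unique characteristics nor $C^1$ regularity of $V^{k+1}$ is automatic: each step can lose a derivative, and this has to be assumed or obtained from extra smoothness of $f,g$.
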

	\textit{\textbf{Proof.}}
The proof follows from Theorem 1 in \cite{VI} and is thus omitted. \hfill $\blacksquare$

	\section{Main result}\label{sec4}
	This section develops a distributed optimal control algorithm for the nonlinear multi-agent systems under study. 
	Specifically, under the constrained information structure $\mathcal{S}_i(t)$, we derive a distributed approximation of the global optimal control by numerically solving the HJB equation \eqref{Vkis} developed below, which enables each agent to iteratively  approximate the global optimal controller.
	To facilitate this iterative approximation, we present the following equations:
	\begin{align}  \label{Vkis}
			\frac{\partial V^{k+1}_{i,s}(t,x^{k}_{i,s})}{\partial t} + [\frac{\partial V^{k+1}_{i,s}(t,x^{k}_{i,s})}{\partial x^{k}_{i,s}} ]'F^{k}_{i,s}(t,x^{k}_{i,s}) \nonumber  
			\\	+l^{k}_{i,s}(t,x^{k}_{i,s})=0,
	\end{align}
	with 
%%%%%%%%%%%%%%%%%%%%%%%%%%%%%%%%%%%%%%%%%%%
		\begin{align}
		 \label{xkis}
			x^{k}_{i,s}(t) =& x^{k}_{i,s-1}(t) +  \delta_s [\mathfrak{x}_{i0} + \int_{0}^{t}   \mathfrak{f}_i(x^{k}_{i,s-1}(\tau))\nonumber  \\
			&+ \mathfrak{g}_i(x^{k}_{i,s-1}(\tau))  u^{k}_{i,s-1}(\tau,x^{k}_{i,s-1})  d\tau \nonumber  \\
			&- x^{k}_{i,s-1}(t) ]  +\frac{1}{ \kappa} \sum_{j\in\mathcal{N}_i  } [x^{k}_{j,s-1}(t) \nonumber  \\&-x^{k}_{i,s-1}(t)], 
	\\
	       \label{Fkis}
			F^{k}_{i,s}(t,x^{k}_{i,s}) =& F^{k}_{i,s-1}(t,x^{k}_{i,s-1}) + \delta_s [ \mathfrak{f}_i(x^{k}_{i,s}(t))\nonumber  \\
			& + \mathfrak{g}_i(x^{k}_{i,s}(t)) u^{k}_{i,s}(t,x^{k}_{i,s}) \nonumber 
			\\
			&- F^{k}_{i,s-1}(t,x^{k}_{i,s-1}) ]\nonumber  \\
			& + \frac{1}{ \kappa} \sum_{j\in\mathcal{N}_i  } [F^{k}_{j,s-1} (t,x^{k}_{j,s-1}) \nonumber  \\
			&-F^{k}_{i,s-1}(t,x^{k}_{i,s-1})   ]	,
		\\
		%		\end{align}
	%		\begin{align}
		%
		 \label{lkis}
		l^{k}_{i,s}(t,x^{k}_{i,s})
			 =& l^{k}_{i,s-1}(t,x^{k}_{i,s-1})  +  \delta_s [ \frac{1}{2}(x^{k}_{i,s}(t))'\tilde{Q}_i \nonumber  \\
			 &\times x^{k}_{i,s}(t) + \frac{1}{2}(u^{k}_{i,s}(t,x^{k}_{i,s}))'\tilde{R}_i\nonumber  \\
			 &\times  u^{k}_{i,s}(t,x^{k}_{i,s}) -l^{k}_{i,s-1}( t,x^{k}_{i,s-1})]\nonumber  \\
			&+ \frac{1}{ \kappa} \sum_{j\in\mathcal{N}_i  } [l^{k}_{j,s-1}(t,x^{k}_{j,s-1}) \nonumber \\
			& -l^{k}_{i,s-1}(t,x^{k}_{i,s-1})],	
		\\
		\label{ukis}
			u^{k}_{i,s}(t,x^{k}_{i,s}) = & -\bar{R}_ig'_i(x^{k}_{i,s})\nabla_xV^{k}_{i,s}(t,x^{k}_{i,s}),
	\end{align}
	where
	\begin{align*}
		&\mathfrak{x}_{i0} = [0',\ldots,0',Nx'_{i0},0',\ldots,0']', \\
%		&\tilde{Q}_i = diag\{0,\ldots,0,NI,0,\ldots,0\}Q,\\
%		&\tilde{R}_i = diag\{0,\ldots,0,NI,0,\ldots,0\}R,\\
		&\bar{R}_i  = diag\{0,\dots,0,R_i^{-1},0,\dots,0\},
	\end{align*}
	while $k = 0,1,2,...$, $s=1,2,3,...$, $V^{0}_{i,s}(\cdot)=0 $,  $x^{k}_{i,0}(\cdot)=0$, $F^{k}_{i,0}(\cdot)=0$, $l^{k}_{i,0}(\cdot)=0$, 
	$\delta_s$ is a step size satisfying $\sum_{s=0}^{\infty} \delta_s =\infty$, $\sum_{s=0}^{\infty} \delta_s^2<\infty$ and $\kappa>0 $ is appropriately chosen to guarantee that the matrix $I_N - \frac{1}{\kappa}\mathcal{L} - \frac{1}{N}1_N1_N'$ is Hurwitz, thereby ensuring the convergence of the distributed iteration process, which is established in the following result.

	\begin{theorem} \label{Theorem1}
%		Given any initial time $t\in [0, T]$ and initial state $\mathfrak{x}_{i0}$,
	For any  $\varepsilon>0$, there exists a positive integer $W$
		such that for $k>W$, $s>W$, 
		\begin{align} 
			\left \| x^{k}_{i,s}(t) - x(t) \right \|< \varepsilon, \label{conx} \\
			\left \| F^{k}_{i,s}(t, x^{k}_{i,s}) - F(t,x) \right \|  < \varepsilon,\label{conF} \\
			\left \| l^{k}_{i,s}(t,x^{k}_{i,s}) - l(t,x) \right \|  < \varepsilon,\label{conl} \\
			\left \| V^{k+1}_{i,s}(t,x^{k}_{i,s}) - V(t,x) \right \|  < \varepsilon,\label{conV}
		\end{align}
		where $x(t)$, $F(t,x)$, $l(t,x)$ and $V(t,x)$ are the corresponding solutions under the global optimal controller \eqref{G_control}.
		\end{theorem}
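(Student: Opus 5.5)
The plan is a two-level argument. The inner level, in $s$, is a consensus-plus-stochastic-approximation recursion. The outer level, in $k$, is the value iteration of Lemma \ref{lemma2}.

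First fix $k$ and study the recursions \eqref{xkis}, \eqref{Fkis}, \eqref{lkis} as $s\to\infty$. Each has the form
\begin{align*}
z_{i,s}=z_{i,s-1}+\delta_s\big[\phi_i(z_{i,s-1})-z_{i,s-1}\big]+\frac{1}{\kappa}\sum_{j\in\mathcal{N}_i}(z_{j,s-1}-z_{i,s-1}),
\end{align*}
where $\phi_i$ is agent $i$'s local "$N$-scaled" contribution. For \eqref{xkis} it is $\mathfrak{x}_{i0}+\int_0^t[\mathfrak{f}_i+\mathfrak{g}_iu_i]d\tau$; the other two are analogous. The global quantities are exactly the averages $\frac1N\sum_i\phi_i$, by the construction of $f,g,x_0$ and by $\frac1N\sum_i\tilde Q_i=Q$, $\frac1N\sum_i\tilde R_i=R$.

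Stacking over agents gives
\begin{align*}
Z_s=\Big[(I_N-\tfrac1\kappa\mathcal{L})\otimes I\Big]Z_{s-1}+\delta_s\big(\Phi(Z_{s-1})-Z_{s-1}\big).
\end{align*}
I will split $Z_s$ into the average $\bar z_s=\frac1N(1_N'\otimes I)Z_s$ and the disagreement $\tilde Z_s=Z_s-1_N\otimes\bar z_s$. Because $\mathcal{L}1_N=0$ and $1_N'\mathcal{L}=0$, the average obeys a pure Robbins--Monro recursion
\begin{align*}
\bar z_s=\bar z_{s-1}+\delta_s\Big(\tfrac1N\sum_i\phi_i(z_{i,s-1})-\bar z_{s-1}\Big).
\end{align*}
The disagreement is multiplied by the contraction $I_N-\frac1\kappa\mathcal{L}-\frac1N1_N1_N'$; I read the paper's "Hurwitz" condition as Schur stability, i.e. spectral radius $<1$. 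It is also perturbed by an $O(\delta_s)$ term.

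A discrete Gronwall/Lyapunov bound then gives $\|\tilde Z_s\|\le C\sum_{r\le s}\rho^{s-r}\delta_r\to0$. Feeding this into the averaged recursion, continuity (local Lipschitzness) of $f_i,g_i$ and of $u^k_{i,s}$ makes the averaged map a vanishing perturbation of the global fixed-point map $z\mapsto\phi(z)$. For \eqref{xkis} this map is the Picard integral operator of \eqref{system_0} under $u^k$. Since $\sum\delta_s=\infty$ and $\sum\delta_s^2<\infty$, the standard stochastic-approximation and ODE-method argument gives $\bar z_s\to$ the fixed point, namely the trajectory $x^k$ of \eqref{system_0}, and then $F^k$, $l^k$.

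To get a unique attracting fixed point I will work on $[0,T]$ with a weighted sup norm $\sup_t e^{-\lambda t}\|\cdot\|$, in which the Picard map is a contraction. The order matters. First $x^k_{i,s}\to x^k$. Then $F^k_{i,s}$ and $l^k_{i,s}$ converge: their forcing terms depend on $x^k_{i,s}$, which already converges, so these are linear averaging recursions with convergent inputs. Finally, the PDE \eqref{Vkis} has coefficients converging to $F^k,l^k$ of Lemma \ref{lemma2}. A stability estimate for linear first-order PDEs, obtained by integrating along characteristics and applying Gronwall, then gives $V^{k+1}_{i,s}\to V^{k+1}$ on compacts. Note that $\bar R_ig_i'$ reproduces the $i$-th block of $R^{-1}g'$ only if $R$ is block diagonal; in the topology-dependent case I would assume this, or treat the discrepancy as part of the consensus error.

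For the outer level, Lemma \ref{lemma2} gives $V^{k+1}\to V$. Since $u$ depends continuously on $\nabla_xV$, this also gives $x^k\to x$, $F^k\to F$ and $l^k\to l$, which needs gradient convergence; I would assume $C^1$ convergence, as in \cite{VI}. Choose $W_1$ so that the outer error is below $\varepsilon/2$ for $k>W_1$, and $W_2$ so that the inner error is below $\varepsilon/2$ for $s>W_2$. The triangle inequality then yields \eqref{conx}--\eqref{conV}.

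The main obstacle is this final uniformity. The theorem asks for a single $W$ that works for all large $k$ and $s$ simultaneously, so $W_2$ must not depend on $k$. I would first try to obtain it from uniform bounds on $\|\nabla_xV^k\|$ on the relevant compact set, which hold because the $V^k$ converge in $C^1$. These bounds make the Lipschitz constants and the contraction rates of the inner recursion uniform in $k$, so the inner convergence rate is uniform in $k$ as well.
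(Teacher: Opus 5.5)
Your inner-level argument has the same skeleton as the paper's appendix. For fixed $k$ you damp the disagreement with $G-Y$, which has the eigenvalue $0$ on $1_N$ and so must indeed be read as Schur rather than Hurwitz stable. You then show the network average converges, pass to $V^{k+1}_{i,s}$ by continuous dependence for the linear PDE, and close with Lemma~\ref{lemma2}. At one point yours is sharper. The appendix compares the average with an auxiliary centralized relaxation and asserts $\Delta x^k_s=(1-\delta_s)\Delta x^k_{s-1}$ exactly. That drops the mismatch between the averaged local forcings evaluated at the $x^k_{i,s-1}$ and the global forcing evaluated at $x^k_{s-1}$. Your relaxed Picard contraction in a weighted sup norm, with the consensus error as a vanishing perturbation, is what actually closes that step.

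\textbf{The gap is in the passage between outer levels.} Since $u^k_{i,s}$ in \eqref{ukis} is built from $\nabla_xV^k_{i,s}$, the output of the level-$(k-1)$ PDE, your induction on $k$ needs $\nabla_xV^{k}_{i,s}\to\nabla_xV^{k}$. Integrating along characteristics with Gronwall gives only sup-norm convergence of $V^{k}_{i,s}$. The gradient of the solution of \eqref{Vkis} is governed by the variational equation, i.e.\ by $\partial_xF^{k-1}_{i,s}$ and $\partial_xl^{k-1}_{i,s}$. (Even this requires reading \eqref{Fkis}--\eqref{lkis} as recursions for fields in $x$, as Algorithm~\ref{alg:IMQ} does at collocation points.) Through \eqref{ukis}, these contain $\nabla_x^2V^{k-1}_{i,s}$. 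So each level costs a derivative: reaching level $k$ requires convergence of the early levels in $C^j$ norms with $j$ growing with $k$. That needs correspondingly smooth $f_i,g_i$ (the paper assumes only $C^1$) and higher-order stability estimates for the linear PDE, none of which is in your plan.

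Your proposed repair of the uniformity in $(k,s)$ fails for the same reason and for a further one. First, the Lipschitz constant of the closed-loop field $f-gR^{-1}g'\nabla_xV^k$, and with it the contraction rate of your Picard map, is controlled by $\nabla_x^2V^k$, not by $\nabla_xV^k$. So $C^1$ convergence of $V^k$ does not make these constants uniform in $k$. Second, and more seriously, the level-$k$ recursion is driven by the distributed iterates $V^k_{i,s}$, not by $V^k$, and their convergence in $s$ is inherited from level $k-1$. The level-$k$ error at index $s$ is therefore bounded by a level-$k$ transient plus $L$ times (weighted averages of) level-$(k-1)$ errors at indices up to $s$. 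Iterating over levels produces factors of order $L^k$. Without a contraction, or at least uniform stability under perturbations, of the level-to-level map $V^k\mapsto V^{k+1}$, no $W$ independent of $k$ follows; Lemma~\ref{lemma2} only asserts convergence of the exact iteration.

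The paper does not resolve this either. Its appendix proves the inner limit for each fixed $k$ and then invokes value-iteration convergence. That yields only the iterated statement (for $k>K(\varepsilon)$ and $s>S(k,\varepsilon)$), not the joint one for all $k,s>W$. You should either prove that iterated version, or state explicitly the uniform regularity and perturbation-stability hypotheses under which your uniformity argument would go through.
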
	
		\textit{\textbf{Proof.}}
		The detailed proof is given in Appendix A. \hfill $\blacksquare$

	For notational simplicity, we introduce the definitions:
	\begin{align*}
		x^{\infty}_i(t) & \doteq  x^{W}_{i,W}(t),\\
		F^{\infty}_i(t)	&  \doteq  F^{W}_{i,W}(t,x^{W}_{i,W}),\\
	    l^{\infty}_i(t)&  \doteq  l^{W}_{i,W}(t,x^{W}_{i,W}),\\	
		V^{\infty}_i(t)&  \doteq  V^{W}_{i,W}(t,x^{W}_{i,W}).
	\end{align*}
	Based on these notations, the distributed optimal control design for agent $i$ is presented below.
	\begin{theorem}\label{themU}
		For any $\varepsilon>0$, there exists a positive integer $W$ such that the distributed optimal controller for each agent $i$ given by
		\begin{align}
	\bar{u}_i(t) \hspace{-3pt}=\hspace{-3pt} -\hspace{-1pt}[0,\ldots, 0,R_i^{-1}g'_i(\check{x}_i(t)),0,\ldots,0]\hspace{-1pt}\nabla_xV_i^\infty(t),
		\end{align}
		satisfies
		\begin{align} \label{conu}
			\left\|U(t) - u(t,x)\right\| < \varepsilon,
		\end{align}
		where $\check{x}_i(t)= [0,\ldots,0,I_{n},0,\ldots,0]x_i^\infty(t)$ and $U(t) = [\bar{u}'_1(t),\bar{u}'_2(t),\ldots,\bar{u}'_N(t)]'$.
\end{theorem}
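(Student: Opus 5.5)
The plan is to reduce Theorem~\ref{themU} to Theorem~\ref{Theorem1} via the structure of the global controller \eqref{G_control}. Since $g(x)=\frac{1}{N}\sum_i \mathfrak{g}_i(x_i)$ is block diagonal with $i$-th block $g_i(x_i)$, and $R$ is taken block diagonal $R=\mathrm{diag}\{R_1,\dots,R_N\}$ (the diagonal case of the weighting; in the topology-dependent case one first restricts to the block structure used in $\bar R_i$), the $i$-th block of $u(t,x)$ is
\begin{align*}
u_i(t,x) = -[0,\ldots,0,R_i^{-1}g_i'(x_i(t)),0,\ldots,0]\nabla_x V(t,x).
\end{align*}
So $\bar u_i(t)-u_i(t,x)$ splits, by adding and subtracting $[0,\ldots,R_i^{-1}g_i'(x_i),\ldots,0]\nabla_xV_i^\infty(t)$, into a term $\|R_i^{-1}\|\,\|g_i'(\check x_i)-g_i'(x_i)\|\,\|\nabla_xV_i^\infty\|$ and a term $\|R_i^{-1}\|\,\|g_i(x_i)\|\,\|\nabla_xV_i^\infty-\nabla_xV\|$. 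Summing over $i$ gives $\|U-u\|\le\sum_i\|\bar u_i-u_i\|$, so it suffices to make each term smaller than $\varepsilon/(2N)$.

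For the first term, I will invoke \eqref{conx}: $\|x^k_{i,s}(t)-x(t)\|$ is small for $k,s$ beyond some $W$, hence $\|\check x_i(t)-x_i(t)\|\le\|x_i^\infty(t)-x(t)\|$ is small. Since the optimal trajectory $x(\cdot)$ is continuous on the compact interval $[0,T]$, it stays in a compact set $K$; the approximants remain in a unit neighbourhood $K_1$ of $K$ once $\varepsilon<1$. Because $g_i$ is $C^1$, it is uniformly continuous and bounded on $K_1$. Hence $\|g_i'(\check x_i)-g_i'(x_i)\|$ is small. The factor $\|\nabla_xV_i^\infty\|$ is bounded because it will be shown to be close to $\nabla_xV$, which is continuous and bounded on $[0,T]\times K$.

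The second term needs convergence of gradients, whereas \eqref{conV} gives only convergence of values; this is the main obstacle. My first attempt is to exploit the transport structure of \eqref{Vkis} and \eqref{linear PDE}. Along characteristics, $V^{k+1}_{i,s}$ and $V^{k+1}$ are integrals of $l^k_{i,s}$ and $l^k$ along the flows of $F^k_{i,s}$ and $F^k$. I would differentiate these representations with respect to the initial point. The gradients then solve the linearized (adjoint) equations $-\dot p=(\partial_x F)'p+\nabla_x l$, and Gronwall's inequality bounds $\|\nabla V^{k+1}_{i,s}-\nabla V^{k+1}\|$ by the differences in $F$, $l$ and their Jacobians. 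The convergence of $F$ and $l$ comes from \eqref{conF}, \eqref{conl}, together with the fact that the consensus recursions \eqref{Fkis}, \eqref{lkis} are linear in the tracked quantities. The Jacobians should therefore converge by the same consensus argument used in the proof of Theorem~\ref{Theorem1}. The gap $\nabla V^{k+1}\to\nabla V$ comes from the $C^1$ convergence in the value-iteration result of \cite{VI} underlying Lemma~\ref{lemma2}.

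If the characteristic argument proves too heavy, the fallback is to use equicontinuity: by Arzel\`a--Ascoli, uniform convergence of $C^1$ functions with equicontinuous gradients on a compact set implies gradient convergence. Either way, I would enlarge $W$ to the maximum of the indices required for the $x$-estimate and for the gradient estimate. With that choice of $W$ in the definition of $x_i^\infty,V_i^\infty$, each agent-wise error is below $\varepsilon/N$, giving \eqref{conu}.
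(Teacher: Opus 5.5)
\paragraph{Verdict.} Your proposal has a genuine gap, and it is the step you flag yourself. The gradient convergence of the value-function iterates is not proved by either of your routes.

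\paragraph{Comparison with the paper.} Your decomposition is essentially the paper's. The paper also splits the $i$-th block $-R_i^{-1}[g_i'(\check x_i)\,\partial V_i^\infty/\partial x_i-g_i'(x_i)\,\partial V/\partial x_i]$ into a $g_i'(\check x_i)-g_i'(x_i)$ term and a gradient-difference term. It adds and subtracts $g_i'(\check x_i)\,\partial V/\partial x_i$ and sums squares, rather than following your choice. Your handling of the $g_i$ term through \eqref{conx}, compactness of the trajectory and uniform continuity of $g_i$ is fine.

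You are right that the crux is $\|\nabla_xV_i^\infty(t)-\nabla_xV(t,x)\|\to 0$, and that \eqref{conV} does not give it. The paper does not close this either. It simply lists the bound $\|\partial V_i^\infty/\partial x_i-\partial V/\partial x_i\|\le\varepsilon/(2C\sqrt{NC_1})$ as a consequence of Theorem~\ref{Theorem1} and continuous differentiability. That is not a valid implication: $h_n(x)=\tfrac1n\sin(n^2x)$ is $C^1$ and tends uniformly to $0$, while $h_n'$ does not converge. So you cannot lean on the paper for this step.

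\paragraph{The characteristics route.} The terms $D_xF^k_{i,s}$ and $\nabla_xl^k_{i,s}$ contain $D_xu^k_{i,s}$, hence the Hessian $\nabla_x^2V^k_{i,s}$. So gradient convergence at level $k+1$ requires Hessian convergence at level $k$, which requires third derivatives at level $k-1$, and so on. Each value-iteration step costs one derivative, whereas only $C^1$ solutions and $C^1$ data $f_i,g_i$ are assumed. Already $V^1\in C^1$ makes $u^1$ merely continuous. The consensus argument of Appendix~A cannot supply these derivatives, for two reasons:
\begin{itemize}
\item it controls only the tracked values of $x$, $F$ and $l$;
\item \eqref{Fkis}--\eqref{lkis} mix neighbours' quantities evaluated at different points $x^k_{j,s-1}(t)$, so there is no pointwise-in-$x$ recursion to differentiate.
\end{itemize}
The $C^1$ convergence $\nabla_xV^{k+1}\to\nabla_xV$ you attribute to the value-iteration reference is also not part of Lemma~\ref{lemma2}.

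\paragraph{The Arzel\`a--Ascoli fallback.} This only relocates the problem. It needs equicontinuity of $\{\nabla_xV^{k+1}_{i,s}(t,\cdot)\}$ uniformly in $(k,s)$, which is in effect a uniform Hessian bound. It also needs uniform convergence of $V^{k+1}_{i,s}(t,\cdot)$ on a neighbourhood of $x(t)$, not just at the single points compared in \eqref{conV}. Neither is established.

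\paragraph{What is missing.} You need a uniform second-derivative (or gradient-modulus) estimate for the iterates, which requires regularity hypotheses beyond those in the paper. Given such an estimate, either Arzel\`a--Ascoli or a Landau--Kolmogorov interpolation inequality on a ball would turn value convergence into gradient convergence.

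\paragraph{The topology-dependent weighting.} Your parenthetical about this case does not work. If $R=(c_{ij}R_{ij})$ is not block diagonal, the $i$-th block of $-R^{-1}g'(x)\nabla_xV$ is $-\sum_j(R^{-1})_{ij}g_j'(x_j)\,\partial V/\partial x_j$. This differs from $-R_i^{-1}g_i'(x_i)\,\partial V/\partial x_i$ even with the exact $V$. The statement therefore needs $R$ block diagonal, as the paper's proof tacitly assumes; restricting to a block structure does not rescue it.
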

	\textit{\textbf{Proof.}} Using the definitions of $U(t)$ and $u(t,x)$ in Lemma \ref{lemma1}, we have the following expansion:
		\begin{align*}	
			U&(t) - u(t,x)\\
%				=&-\begin{pmatrix}
%					[R_1^{-1}g'_{1}( \check{x} _{1}(t)),0,\ldots ,0]\nabla_xV_1^\infty(t) \\
%					\vdots \\
%					[0,\ldots,0,R_i^{-1}g'_{i}( \check{x} _{i}(t)),0,\ldots ,0]\nabla_xV_i^\infty(t)\\
%					\vdots \\
%					[0,\ldots ,0,R_N^{-1}g'_{N}( \check{x} _{N}(t))]\nabla_xV_N^\infty(t)\\
%				\end{pmatrix}\\
%				&+R^{-1}g'(x) \nabla_xV(t,x)
%				%
%			\end{align*}
%		\begin{align*}
%			=& 
%				-\begin{pmatrix}
%					R_1^{-1}g'_{1}( \check{x}_{1}(t)) \frac{\partial V_1^\infty(t) }{\partial x_1} \\
%					\vdots \\
%					R_i^{-1}g'_{i}( \check{x}_{i}(t))\frac{\partial V_i^\infty(t) }{\partial x_i}\\
%					\vdots \\
%					R_N^{-1}g'_{N}( \check{x}_{N}(t))\frac{\partial V_N^\infty(t) }{\partial x_N}\\
%				\end{pmatrix}
%				+\begin{pmatrix}
%					R_1^{-1}g'_{1}( x_{1}(t)) \frac{\partial V(t,x) }{\partial x_1} \\
%					\vdots \\
%					R_i^{-1}g'_{i}( x_{i}(t))\frac{\partial V(t,x) }{\partial x_i}\\
%					\vdots \\
%					R_N^{-1}g'_{N}( x_{N}(t))\frac{\partial V(t,x) }{\partial x_N}\\
%				\end{pmatrix} \\
				%
				=&
				\begin{pmatrix}
				 -	R_1^{-1} [g'_{1}( \check{x}_{1}(t)) \frac{\partial V_1^\infty(t) }{\partial x_1}- g'_{1}( x_{1}(t)) \frac{\partial V(t,x) }{\partial x_1} ]	 \\
					\vdots \\
				 -	R_i^{-1} [g'_{i}( \check{x}_{i}(t)) \frac{\partial V_i^\infty(t) }{\partial x_i}- g'_{i}( x_{1}(t)) \frac{\partial V(t,x) }{\partial x_i} ]	\\
					\vdots \\
				 -	R_N^{-1} [g'_{N}( \check{x}_{N}(t)) \frac{\partial V_N^\infty(t) }{\partial x_N}- g'_{N}( x_{N}(t)) \frac{\partial V(t,x) }{\partial x_N} ]	\\
				\end{pmatrix}.
		\end{align*}
%%%%%%%%%%%%%%%%%%%%%%	
 Then, by applying the Cauchy-Schwarz inequality and defining  $C_1 = \max_i \|R_i^{-1}\|^2$, we obtain
	\begin{align} \label{U-u}
	&	\left\|U(t) - u(t,x)\right\|^2 \nonumber \\
	=&  \sum_{i=1}^{N}  \left\|	-	R_i^{-1} [g'_{i}( \check{x}_{i}(t)) \frac{\partial V_i^\infty(t) }{\partial x_i}- g'_{i}( x_{1}(t)) \frac{\partial V(t,x) }{\partial x_i} ]  \right\|^2 
	\nonumber \\
		\le& C_1  \sum_{i=1}^{N}  \left\| g'_{i}( \check{x}_{i}(t)) \frac{\partial V_i^\infty(t) }{\partial x_i}- g'_{i}( x_{i}(t)) \frac{\partial V(t,x) }{\partial x_i}   \right\| ^2
\nonumber	\\
	 \le& 2 C_1  \sum_{i=1}^{N}  \{	
			\left\|	g'_{i}( \check{x}_{i}(t)) \right\|^2
			\left\| \frac{\partial V_i^\infty(t) }{\partial x_i}- \frac{\partial V(t,x) }{\partial x_i}  \right\|^2 \nonumber \\
	&	+\left\|g'_{i}( \check{x}_{i}(t)) - g'_{i}( x_{1}(t))\right\|^2  
	      \left\| \frac{\partial V(t,x) }{\partial x_i} \right\|^2 \}   .
	\end{align}
%%%%%%%%%%%%%%%%%%%%%%		
	Since the system operates over the finite time horizon $[0,T]$ with the fixed initial state, the state trajectory $\{x(t)\}$, $\{x_{i,s}^{k}(t)\}$ are bounded. Together with the continuous differentiability of $V(\cdot)$, $V_{i,s}^{k}(\cdot)$, $g_i(\cdot)$ and the convergence results in Theorem 1, there exist positive constants $C$ such that for all $i$,
	\begin{align*}
		& \begin{Vmatrix}  g'_{i}( \check{x}_{i}(t))\end{Vmatrix}\le C, \ \ \ 
		\begin{Vmatrix}\frac{\partial V_i^\infty(t) }{\partial x_i}- \frac{\partial V(t,x) }{\partial x_i} \end{Vmatrix}\le \frac{1}{2C\sqrt{NC_1}} \varepsilon ,\\
	& 	\begin{Vmatrix}\frac{\partial V(t,x) }{\partial x_i}\end{Vmatrix}\le C ,\ \ \  \ \
		\begin{Vmatrix}	g'_{i}( \check{x}_{i}(t)) - g'_{i}( x_{i}(t))\end{Vmatrix}\le \frac{1}{2C\sqrt{NC_1}} \varepsilon.
	\end{align*}
%%%%%%%%%%%%%%%%%

 Substituting these bounds into the above inequality \eqref{U-u}, we have
	$	\left\|U(t) - u(t,x)\right\|^2 \le \varepsilon^2 $, which implies that \eqref{conu} holds.
 The proof is now completed. \hfill 	 $\blacksquare$

	In light of the above analysis, the distributed algorithm for solving the optimal controller of each agent $i$ is shown in Algorithm \ref{alg:dis}.	
	
%%%%%%%%%%%%%%%%%%%%%%%%%%%%%%%%%%%%%%%		
	\begin{algorithm}[]
		\caption{Distributed Optimal Controller}
		\label{alg:dis}
		\begin{algorithmic}[1]
			\item \textbf{Input:} 
		Sufficiently large positive integer $W$, initial value $x^{k}_{i,0}(\cdot)=0$, $F^{k}_{i,0}(\cdot)=0$, $l^{k}_{i,0}(\cdot)=0$, $V^{0}_{i,s}(\cdot)=0 $.
			\item \textbf{Output:} The distributed optimal controller $\bar{u}_i(t)$.

		     \item \textbf{for} {$k = 0,1,2,...,W  $} \textbf{do}
			\item \hspace{0.2cm} \textbf{for} {$s = 0,1,2,...,W$} \textbf{do}
			
			\item \hspace{0.4cm} Compute $x^{k}_{i,s}(t,x^{k}_{i,s})$, $F^{k}_{i,s}(t,x^{k}_{i,s})$, $l^{k}_{i,s}(t,x^{k}_{i,s})$, $u^{k}_{i,s}(t,x^{k}_{i,s})$ with  \eqref{xkis}-\eqref{ukis}.	
			\item \hspace{0.55cm} Solve $V^{k+1}_{i,s}(t,x^{k}_{i,s})$ by  HJB equation \eqref{Vkis}.
			\item \hspace{0.2cm} \textbf{end for}
			\item \textbf{end for}	
		\end{algorithmic}   
	\end{algorithm}

	\begin{remark}
		Running the Algorithm \ref{alg:dis}, we can obtain the distributed optimal controller $\bar{u}_i(t)$ that satisfies \eqref{conu}. The detailed procedure for solving the HJB equation \eqref{Vkis} involved in the algorithm will be presented in Section~\ref{sec5}.
	\end{remark}

	\section{Application simulation of multi-UGV system}\label{sec5}
	
	This section presents a practical numerical simulation on the multi-agent system composed of five UGVs to verify the effectiveness of 
	the proposed distributed algorithm.
%	 \ref{alg:dis}.

Specifically, the five-UGV system adopted in the simulation is a typical distributed cooperative control system, whose dynamics model for the $i^{th}$ UGV is given in \cite{UGV,UGVs} as follows: 
\begin{align*}
	&	\dot{r}_{xi}(t) = v_i(t)cos(\theta_i(t)),
	\\
	&	\dot{r}_{yi}(t) = v_i(t)sin(\theta_i(t)),\\
	&	\dot{\theta}_{i}(t) = \omega_i(t),
\end{align*}
where $(r_{xi}(t), r_{yi}(t))$, $\theta_{i}(t)$, $(v_i(t),\omega_i(t))$ denote the Cartesian position, orientation, and the linear and angular velocity of the $i^{th}$ UGV, respectively. Obviously, this model is consistent with system \eqref{system_agent}, where the state $x_i(t)=[r'_{xi}(t), r'_{yi}(t), \theta'_{i}(t)]'$, control input $u_i(t)=[v'_{i}(t), \omega'_i(t)]'$, the system functions $f_i(x_i(t))=0$ and $g_i(x_i(t))=\begin{bmatrix}
	cos(\theta_i(t))	& 0\\
	sin(\theta_i(t))	& 0\\
	0	&1
\end{bmatrix}$.	
The optimal control goal is to minimize both the relative states and energy consumption for the five-UGV system with given initial states
$x_{10}=[10,2,\frac{\pi}{8}]',
x_{20}=[14,4,\frac{\pi}{4}]',
x_{30}=[15,2,\frac{\pi}{3}]',
x_{40}=[12,1,\frac{\pi}{4}]',
x_{50}=[10,0,\frac{\pi}{8}]'$, where the communication topology of the five UGVs is shown in Fig. \ref{fig:topo},
and the weighting matrices of the cost function \eqref{costfun} are set as
\begin{align*}
		Q=\begin{pmatrix}
		&2I,&-2I,&0,&0,&0 \\
		&-2I,&6I,&-2I,&0,&-2I \\
		&0,&-2I,&2I,&0,&0 \\
		&0,&0,&0,&2I,&-2I \\
		&0,&-2I,&0,&-2I,&4I \\
	\end{pmatrix}, 
	R=0.01I.
\end{align*}
\FloatBarrier			
\begin{figure}[H]
	\centering		
	\includegraphics[width=0.49\textwidth, keepaspectratio]{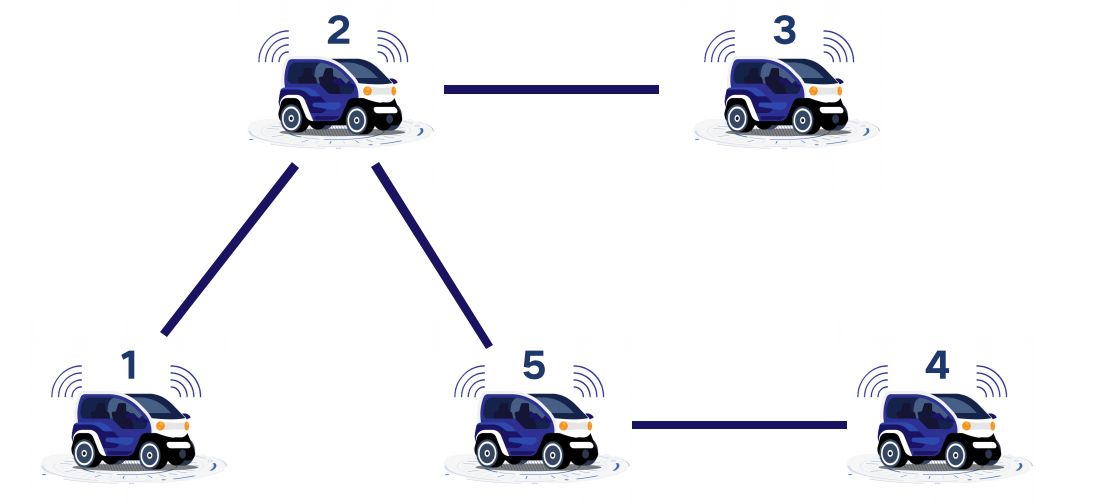} 
	\caption{Communication topology among UGVs. } 
	\label{fig:topo}
\end{figure}	
\FloatBarrier

Building upon the specified simulation parameters, the numerical experiment for distributed optimal control of the five-UGVs system is implemented based on the framework of Algorithm \ref{alg:dis}, with a key step involving the solution to the HJB equation \eqref{Vkis}. 
As this equation is a PDE for which an analytical solution is generally not obtainable, we adopt a neural network-based (NN) numerical method to approximate the solution.
In particular, the solution to the PDE \eqref{Vkis} is approximated by the neural network $\hat{V}^{k+1}_{i,s}(t,x) = \Theta^{k+1}_{i,s}(t)\Psi(x)$,
%\begin{align*}
%	\hat{V}^{k+1}_{i,s}(t,x) = \Theta^{k+1}_{i,s}(t)\Psi(x),
%\end{align*}
where 
$\Theta^{k+1}_{i,s}(t) =[\theta^{k+1}_{i,s,1}(t),\cdots , \theta_{i,s,j}^{k+1}(t),\cdots,\theta_{i,s,M}^{k+1}(t)]$, $\Psi(x)=[ 
\varphi_1'(x), \cdots,\varphi_j'(x) ,\cdots,  \varphi_{M}'(x)]' $,
the inverse multiquadric radial basis function (IMQ-RBF) $\varphi_j(x)$ is described as 
\begin{align} \label{RBF}
	\varphi_j(x)= \frac{1}{\sqrt{\left \| x-c_j \right \|_2^2 +z^2} },
\end{align}
with the $j^{th}$ basis center $c_j$ and the shape parameter $z>0$.
The detailed implementation steps are presented in Algorithm \ref{alg:IMQ} below.

\begin{algorithm}[H]
	\caption{ NN Approximation for $V^{k+1}_{i,s}(\cdot)$}
	\label{alg:IMQ}
	\begin{algorithmic}[1]
		\item \textbf{Input:}
		Collocation and RBF centers $\{x_j\}_{j=1}^M$, time step $t_n \in \{ n\Delta t |n =  0,1,\dots,N_t \}$, $ \nabla V_{i,s}^{0}(t_n,x_j)=0 $, $u^{0}_{i,s}(t_n,x_j)=0$, $\Theta^{k}_{i,s}(T) = 0$.
		\item \textbf{Output:} Value function ${V}^{k+1}_{i,s}(t,x)\approx  \Theta^{k+1}_{i,s}(t)\Psi(x)$.
%		 and control $u^k_{i,s}(t,x)$.					
		\item \textbf{for }{$ k = 0,1,2,3,...$}   \textbf{do}
        \item \hspace{0.005cm}  \textbf{for }{$ n = N_t-1, \dots,0$}     \textbf{do}
			 \item \hspace{0.15cm} Compute cost vector $L^{k}(t_{n})$:
			$	L_j^{k}(t_{n}) = l^k_{i,s}(t_{n},x_j)$ .
			 \item  \hspace{0.15cm}  Compute $ \nabla\varphi_b(x_j)$ and obtain matrix $B^{k}(t_{n})$ with $B_{bj}^{k}(\hspace{-0.03cm}t_{n}\hspace{-0.03cm})\hspace{-0.1cm}=\hspace{-0.1cm} \nabla\varphi_b(x_j) F^k_{i,s}(t_{n},x_j)$.
         \item \hspace{0.15cm} Construct collocation matrix $A$ with
$ A_{bj} = \varphi_b(x_j)$ by \eqref{RBF}.
		\item \hspace{0.15cm}  Solve for coefficients $\Theta^{k+1}_{i,s}(t_n) $ via
			$	\Theta^{k+1}_{i,s}(t_n) = [ A - \Delta t B^{k}(t_{n})]^{-1} [\Theta^{k+1}_{i,s}(t_{n+1}) A + \Delta t L^{k}(t_{n}) ]$.			
			\item \hspace{0.15cm}  Compute
						$ \nabla V^{k+1}_{i,s}(t_n,x_j)$
			and $u^{k}_{i,s}(t_n,x_j)$.			
				\item \hspace{0.005cm}  \textbf{end for}
			\item \textbf{end for}
	\end{algorithmic}
\end{algorithm}
%Construct collocation matrix

By setting parameters as $\kappa =2.5$, step size $\delta_s =\frac{1}{s}$,  shape parameter $z=70$, finite time horizon $T=2$, discrete time step $N_t=101$ and $W=2000$, Algorithm \ref{alg:dis} and Algorithm \ref{alg:IMQ} are implemented to generate the norm trajectory curves of the matrix terms $x^{\infty}_i(t)$, $F^{\infty}_i(t)$, $l^{\infty}_i(t)$, $V^{\infty}_i(t)$, which are illustrated in Fig. \ref{fig:x} to Fig. \ref{fig:V}, respectively. 
The norm trajectory curve of the distributed controller $U(t)$ designed in Theorem \ref{themU} is displayed in Fig. \ref{fig:u}.

     It can be clearly observed that $x^{\infty}_i(t)$, $F^{\infty}_i(t)$, $l^{\infty}_i(t)$ and $V^{\infty}_i(t)$ derived from the proposed distributed algorithms converge to their global centralized counterparts $x(t)$, $F(t,x)$, $l(t,x)$ and $V(t,x)$, respectively. The optimal controller $U(t)$ designed on these bases converges to the global centralized optimal controller $u(t,x)$. 
	
Furthermore, a comparative simulation is conducted with the consensus protocol in \cite{protocol2}.
The performance index of the proposed approach is $110.25$, while that of the protocol in \cite{protocol2} is $174.04$.
		It is evident that the proposed method yields a lower performance index, which further verifies the superior control performance of the proposed distributed algorithm over the traditional method.

\begin{figure}[H]
	\begin{center}
		\includegraphics[width=0.47\textwidth, keepaspectratio]{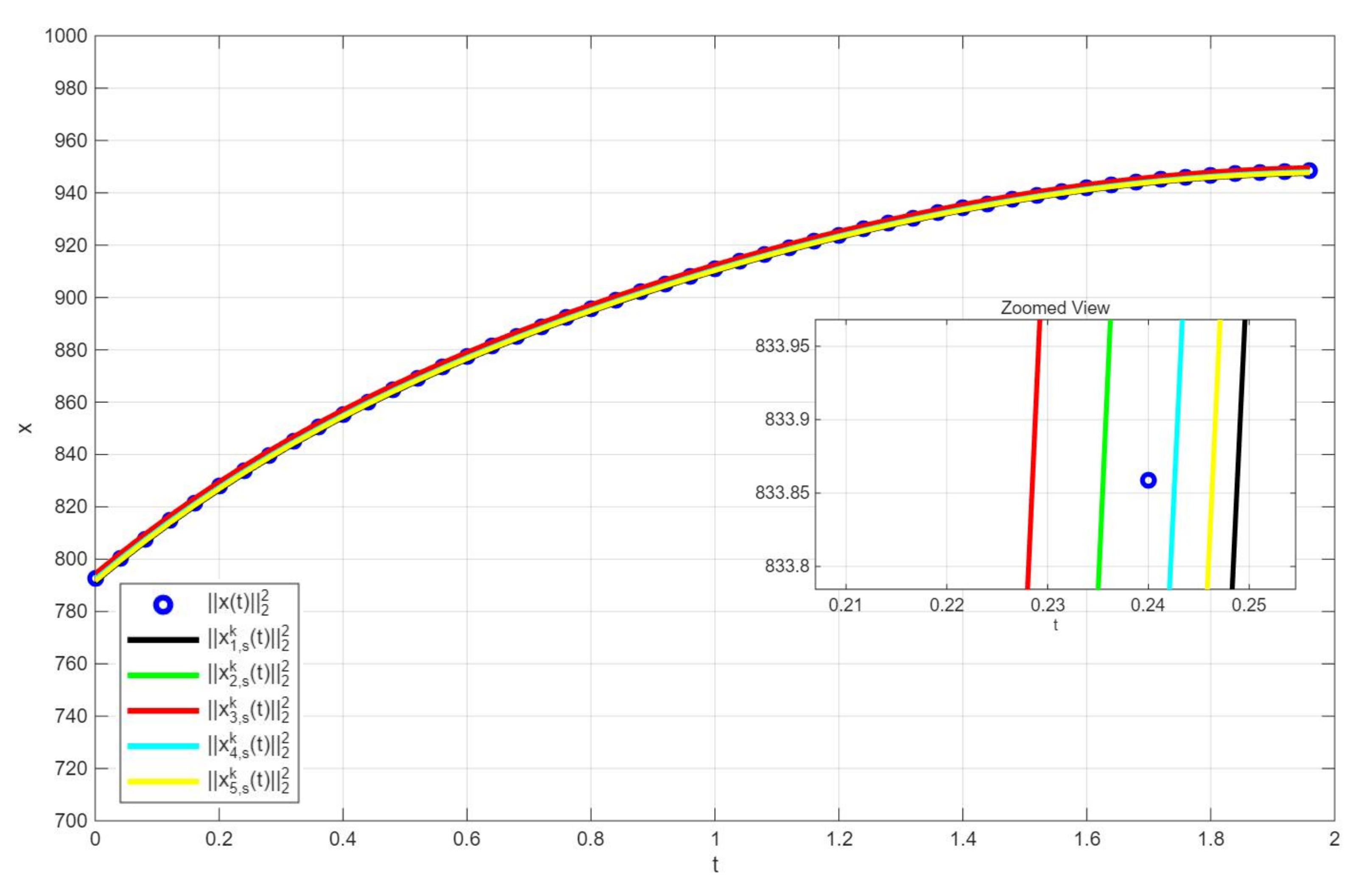}
		\caption{The trajectories of $\left \| x^k_{i,s} (\cdot)\right \| $.}
		\label{fig:x}
	\end{center}
\end{figure}

\begin{figure}[H]
	\begin{center}
		\includegraphics[width=0.47\textwidth, keepaspectratio]{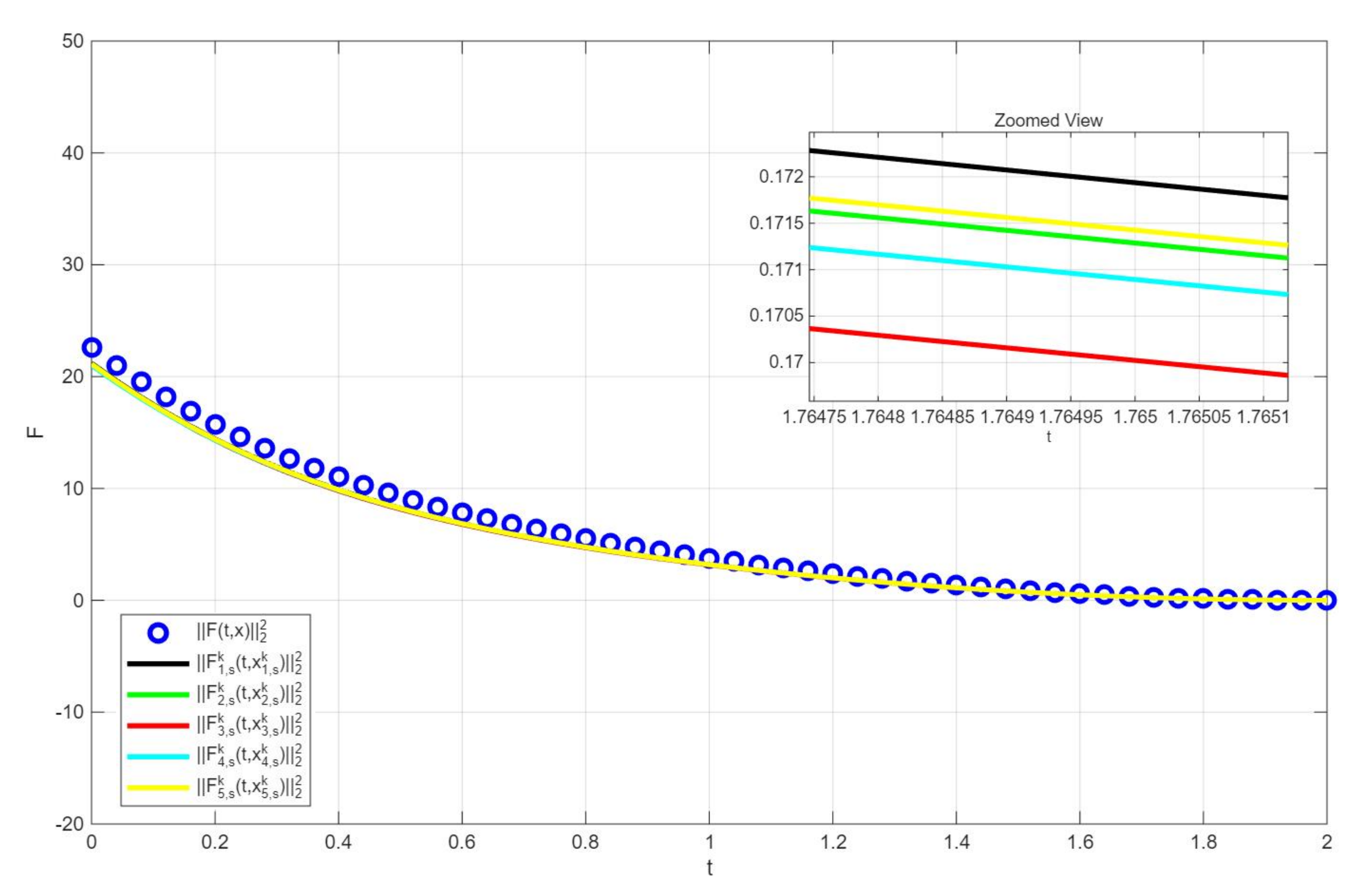}
		\caption{The trajectories of $\left \| F^k_{i,s} (\cdot)\right \| $.}
		\label{fig:F}
	\end{center}
\end{figure}

\begin{figure}[H]
	\begin{center}
		\includegraphics[width=0.47\textwidth, keepaspectratio]{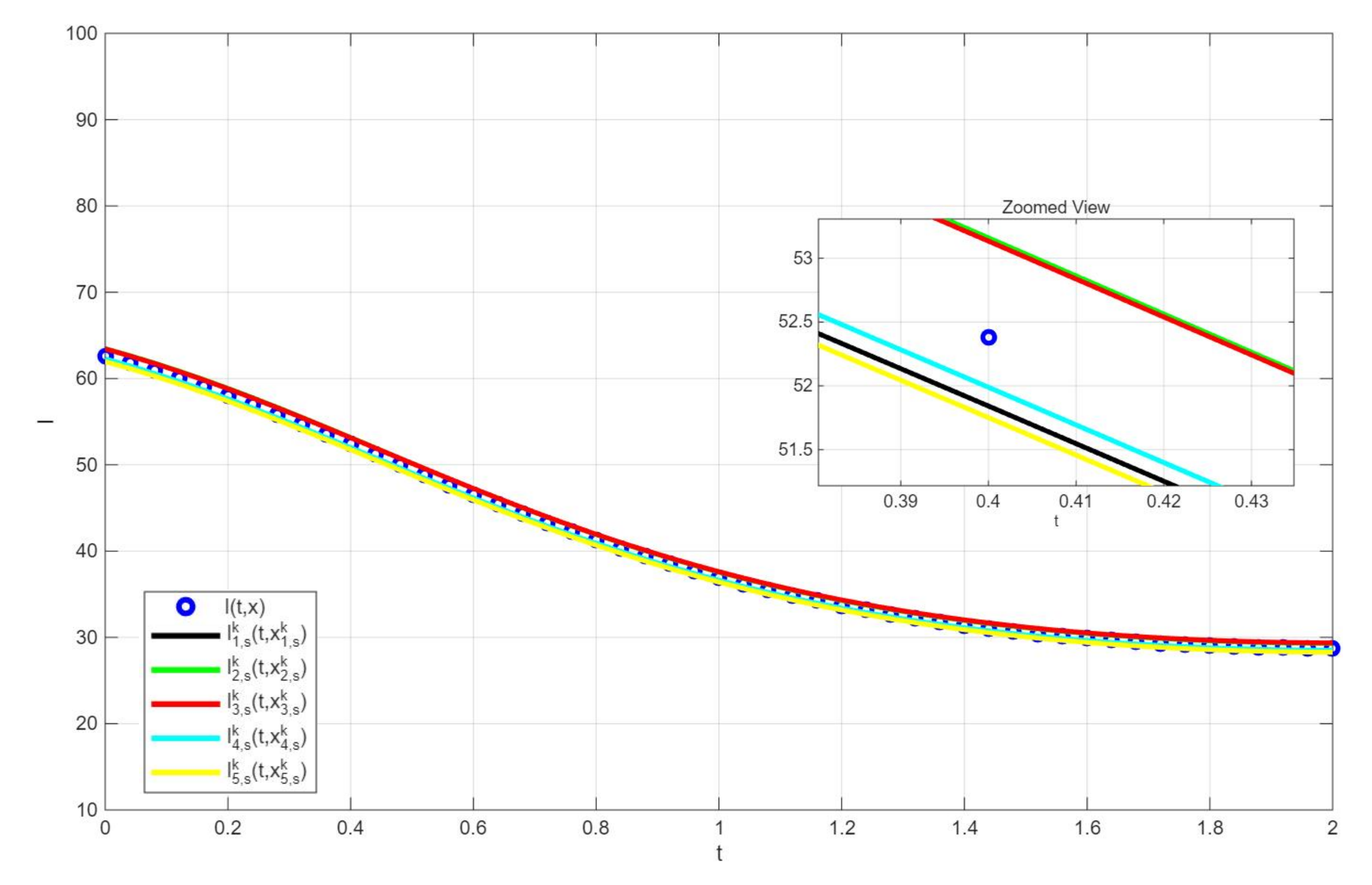}
		\caption{The trajectories of $\left \| l^k_{i,s} (\cdot)\right \| $.}
		\label{fig:l}
	\end{center}
\end{figure}

\begin{figure}[H]
	\begin{center}
		\includegraphics[width=0.47\textwidth, keepaspectratio]{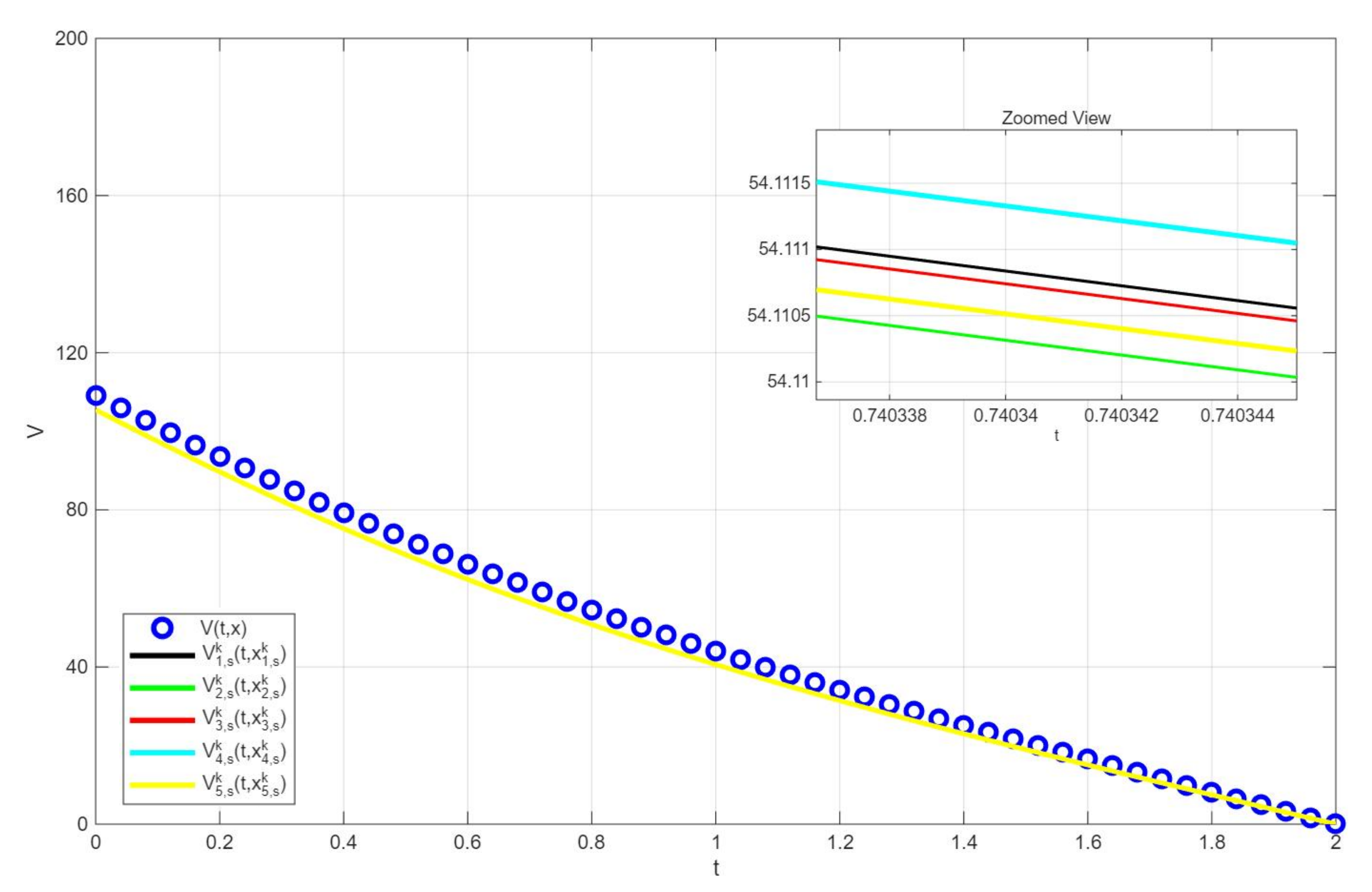}
		\caption{The trajectories of $\left \| V^k_{i,s} (\cdot)\right \| $.}
		\label{fig:V}
	\end{center}
\end{figure}

\begin{figure}[H]
	\begin{center}
		\includegraphics[width=0.47\textwidth, keepaspectratio]{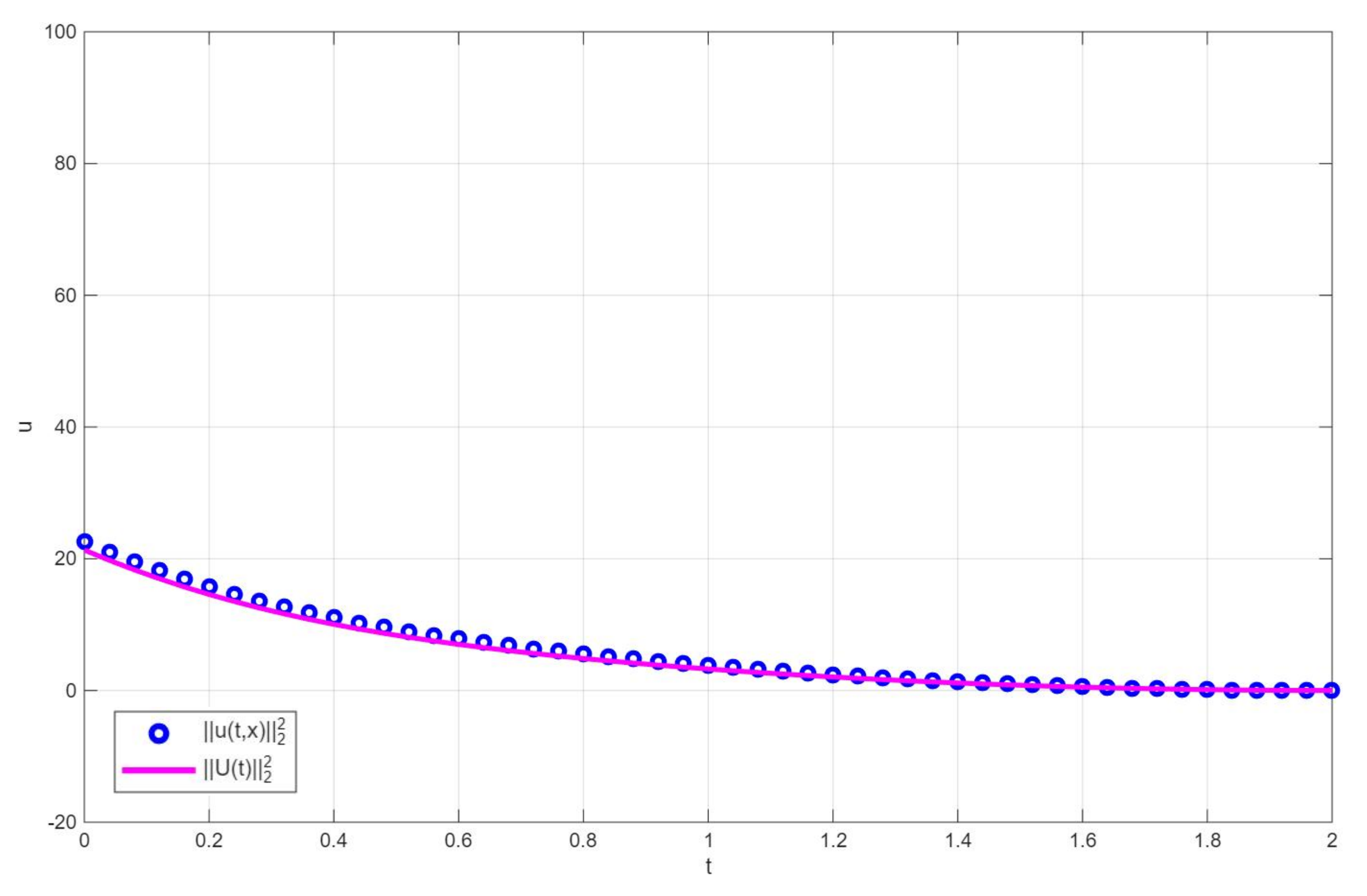}
		\caption{The trajectories of $\left \| u^k_{i,s} (\cdot)\right \| $.}
		\label{fig:u}
	\end{center}
\end{figure}

%\begin{figure}
%	\begin{center}
%		\includegraphics[width=0.49\textwidth, keepaspectratio]{UGV5_PER_INDIX_1.png}
%		\caption{Performance index comparison between the proposed algorithm and \cite{protocol2}.}
%		\label{fig:proto}
%	\end{center}
%\end{figure}
%

%\bibliographystyle{plain}        % Include this if you use bibtex 
%\bibliography{autosam}           % and a bib file to produce the 
%                                 % bibliography (preferred). The
%                                 % correct style is generated by
%                                 % Elsevier at the time of printing.

\appendix
\section{Proof of Theorem \ref{Theorem1} }    % Each appendix must have a short title.
For the convergence analysis, we first fix the number of iterations $k$ in the value iteration. It is then aimed to show that for an arbitrary positive scalar $\varepsilon>0$, there exists a positive integer $W$ such that 
\begin{align}
	\left \| x^{k}_{i,s}(t) - x^{k}(t) \right \| < \varepsilon, \label{kx}\\
	\left \| F^{k}_{i,s}(t, x^{k}_{i,s}) - F^{k}(t,x) \right \|  < \varepsilon,\label{kF} \\
	\left \| l^{k}_{i,s}(t, x^{k}_{i,s}) - l^{k}(t,x) \right \|  < \varepsilon,\label{kl}\\
	\left \| V^{k+1}_{i,s}(t, x^{k}_{i,s}) - V^{k+1}(t,x) \right \|  < \varepsilon,\label{kV}
\end{align}
hold for all $s>W$ and $(t,x)\in[0,T]\times \mathbb{R}^{nN}$,
where $F^{k}(t,x)$, $l^{k}(t,x)$ and $V^{k+1}(t,x) $ are as shown in Lemma \ref{lemma2}, and $x^{k}(t)$ is the corresponding state under the controller \eqref{uk}.
 The proof is divided into two steps.  
 
	The first step is to prove that the following equations hold for any $i$ and $j$,
\begin{align}
	\left \| x^{k}_{i,s}(t) - x^{k}_{j,s}(t) \right \| < \varepsilon, \label{kxij}\\
	\left \| F^{k}_{i,s}(t, x^{k}_{i,s}) - F^{k}_{j,s}(t, x^{k}_{j,s}) \right \|  < \varepsilon, \label{kFij} \\
	\left \| l^{k}_{i,s}(t, x^{k}_{i,s}) - l^{k}_{j,s}(t, x^{k}_{j,s}) \right \|  < \varepsilon,\label{klij}\\
	\left \| V^{k+1}_{i,s}(t, x^{k}_{i,s}) - V^{k+1}_{j,s}(t, x^{k}_{j,s}) \right \|  < \varepsilon. \label{kVij}
\end{align}
To this end, we define the augmented vectors
\begin{align*}
	\mathbf{x}^{k}_{s}(t) &= \begin{bmatrix}
		x^{k}_{1,s}(t)      \\
		\vdots              \\
		x^{k}_{i,s}(t)      \\
		\vdots              \\
		x^{k}_{N,s}(t)
	\end{bmatrix}, &
	\mathbf{\hat{x}}^{k}_{s}(t) &= \begin{bmatrix}
		\hat{x}^{k}_{1,s}(t)    \\
		\vdots                  \\
		\hat{x}^{k}_{i,s}(t)    \\
		\vdots                  \\
		\hat{x}^{k}_{N,s}(t)
	\end{bmatrix},
	\\
%\end{align*}
%\begin{align*}
	%
	\mathbf{F}^{k}_{s}(t) &= \begin{bmatrix}
		F^{k}_{1,s}(t, x^{k}_{1,s})    \\
		\vdots              \\
		F^{k}_{i,s}(t, x^{k}_{i,s})    \\
		\vdots              \\
		F^{k}_{N,s}(t, x^{k}_{N,s})
	\end{bmatrix}, &
	\mathbf{\hat{F}}^{k}_{s}(t) &= \begin{bmatrix}
		\hat{F}^{k}_{1,s}(t, x^{k}_{1,s})  \\
		\vdots                  \\
		\hat{F}^{k}_{i,s}(t, x^{k}_{i,s})  \\
		\vdots                  \\
		\hat{F}^{k}_{N,s}(t, x^{k}_{N,s})
	\end{bmatrix}, \\
	\mathbf{l}^{k}_{s}(t) &= \begin{bmatrix}
		l^{k}_{1,s}(t, x^{k}_{1,s})    \\
		\vdots              \\
		l^{k}_{i,s}(t, x^{k}_{i,s})    \\
		\vdots              \\
		l^{k}_{N,s}(t, x^{k}_{N,s})
	\end{bmatrix}, &
	\mathbf{\hat{l}}^{k}_{s}(t) &= \begin{bmatrix}
		\hat{l}^{k}_{1,s}(t, x^{k}_{1,s})  \\
		\vdots                  \\
		\hat{l}^{k}_{i,s}(t, x^{k}_{i,s})  \\
		\vdots                  \\
		\hat{l}^{k}_{N,s}(t, x^{k}_{N,s})
	\end{bmatrix},
\end{align*}
with 
\begin{align*}
	\hat{x}^{k}_{i,s}(t)=& \mathfrak{x}_{i0} + \int_{0}^{t}   \mathfrak{f}_i(x^{k}_{i,s-1}(\tau)) + \mathfrak{g}_i(x^{k}_{i,s-1}(\tau)) 
	\\ & \times u^{k}_{i,s-1}(\tau, x^{k}_{i,s-1}) d\tau - x^{k}_{i,s-1}(t),
	\\
	\hat{F}^{k}_{i,s}(t, x^{k}_{i,s}) = & \mathfrak{f}_i(x^{k}_{i,s}) + \mathfrak{g}_i(x^{k}_{i,s})u^{k}_{i,s}(t, x^{k}_{i,s})\\& - F^{k}_{i,s-1}(t, x^{k}_{i,s-1})  ,
		\\
%\end{align*}
%\begin{align*}
	%
	\hat{l}^{k}_{i,s}(t, x^{k}_{i,s}) = & \frac{1}{2}(x^{k}_{i,s}(t))'Qx^{k}_{i,s}(t) + \frac{1}{2}(u^{k}_{i,s}(t, x^{k}_{i,s}))'\\
	&\times Ru^{k}_{i,s}(t, x^{k}_{i,s})- l^{k}_{i,s-1}(t, x^{k}_{i,s-1}).
%	 \\
%	%
%	u^{k}_{i,s}(t, x^{k}_{i,s}) = & u^{k}_{i,s}(x^{k}_{i,s}(t)).
\end{align*}
Combining \eqref{xkis}-\eqref{lkis} with the Laplacian matrix $\mathcal{L}$, we obtain the iterative formulas for the augmented vectors as follows:
\begin{align}
	\mathbf{x}^{k}_{s}(t) =& [ (I_N-\frac{1}{\kappa} \mathcal{L} )\otimes I_{nN}] \mathbf{x}^{k}_{s-1}(t) + \delta_s\mathbf{\hat{x}}^{k}_{s}(t),  \label{xkappa} \\
	\mathbf{F}^{k}_{s}(t) =& [ (I_N-\frac{1}{\kappa} \mathcal{L} )\otimes I_{nN}] \mathbf{F}^{k}_{s-1}(t) + \delta_s\mathbf{\hat{F}}^{k}_{s}(t), \label{Fkappa} \\
	\mathbf{l}^{k}_{s}(t) =& [ (I_N-\frac{1}{\kappa} \mathcal{L} )\otimes I_{nN}] \mathbf{l}^{k}_{s-1}(t) + \delta_s\mathbf{\hat{l}}^{k}_{s}(t). \label{lkappa}
\end{align}
We proceed to analyze the boundedness of the above augmented vectors in the sequel.
	To this end, we denote 
\begin{align*}
	\tilde{x}^{k}_{s}(t) = (Z'\otimes I_{nN}) \mathbf{x}^{k}_{s}(t),\\
	\tilde{F}^{k}_{s}(t) = (Z'\otimes I_{nN}) \mathbf{F}^{k}_{s}(t),\\
	\tilde{l}^{k}_{s}(t) = (Z'\otimes I_{nN}) \mathbf{l}^{k}_{s}(t),
\end{align*}
where  
$Z$ is the orthogonal matrix satisfying $Z'\mathcal{L}Z=diag\{ \lambda_1,\ldots,\lambda_N\}$ with $0=\lambda_1 \leq \lambda_2 \leq \ldots \leq \lambda_N $ being the eigenvalues of $\mathcal{L}$.
We then derive the corresponding iterative dynamics as:
\begin{align*} 
	\tilde{x}^{k}_{s}(t) = & [(I_N - \frac{1}{\kappa}Z'\mathcal{L}Z)  \otimes   I_{nN}]\tilde{x}^{k}_{s-1}(t) \\
	&-  \delta_s(Z'  \otimes   I_{nN})\mathbf{\hat{x}}^{k}_{s}(t),\\
	\tilde{F}^{k}_{s}(t) = & [(I_N - \frac{1}{\kappa}Z'\mathcal{L}Z)  \otimes   I_{nN}]\tilde{F}^{k}_{s-1}(t)  \\
	&-  \delta_s(Z'  \otimes   I_{nN})\mathbf{\hat{F}}^{k}_{s}(t),
	\\
	\tilde{l}^{k}_{s}(t) = & [(I_N - \frac{1}{\kappa}Z'\mathcal{L}Z)  \otimes   I_{nN}]\tilde{l}^{k}_{s-1}(t)   \\
	&-  \delta_s(Z'  \otimes  I_{nN})\mathbf{\hat{l}}^{k}_{s}(t).
\end{align*}
By using the properties of Laplacian matrices, we can obtain that $	\tilde{x}^{k}_{s}(t)$, $\tilde{F}^{k}_{s}(t)$ and $	\tilde{l}^{k}_{s}(t) $ are bounded, which in turn ensures the boundedness of the augmented vectors
$\mathbf{x}^{k}_{s}(t)$, $\mathbf{F}^{k}_{s}(t)$, $\mathbf{l}^{k}_{s}(t)$, $	\mathbf{\hat{x}}^{k}_{s}(t) $, $\mathbf{\hat{F}}^{k}_{s}(t) $, and $	\mathbf{\hat{l}}^{k}_{s}(t) $ \cite{Dissolving}.

Then, we define the deviation vectors
\begin{align*}
	\sigma^k_s(t) = [(I_N-Y)\otimes I_{nN}]\mathbf{x}^{k}_{s}(t),\\
	\beta^k_s(t) = [(I_N-Y)\otimes I_{nN}]\mathbf{F}^{k}_{s}(t),\\
	\gamma^k_s(t) = [(I_N-Y)\otimes I_{nN}]\mathbf{l}^{k}_{s}(t),
\end{align*}
where $Y = \frac{1}{N}1_N1_N^{'}$.
Substituting \eqref{xkappa}-\eqref{lkappa} into the defined deviation vectors, and using the matrix property $GY=YG=Y^2=Y$ with $G=I_N-\frac{1}{\kappa} \mathcal{L}$, we obtain the iterative dynamics for the deviation vectors: 
\begin{align*}
	\sigma^k_s(t)=&[(G - Y)  \otimes  I_{nN}]\sigma^k_{s-1}(t) \\
	& +  \delta_s  [(I_N - Y)  \otimes  I_{nN}] \mathbf{\hat{x}}^{k}_{s}(t),\\
	\beta^k_s(t)=&[(G - Y) \otimes  I_{nN}]\beta^k_{s-1}(t) \\
	& +  \delta_s  [(I_N - Y) \otimes  I_{nN}] \mathbf{\hat{F}}^{k}_{s}(t),
	\\
%\end{align*}
%\begin{align*}
	\gamma^k_s(t)=&[(G - Y) \otimes  I_{nN}]\gamma^k_{s-1}(t)  \\
	&+  \delta_s  [(I_N - Y) \otimes  I_{nN}] \mathbf{\hat{l}}^{k}_{s}(t).
\end{align*}
By carrying out the iterative recursion, it yields that
\begin{align*}
	\sigma^k_s(t) =& [(G-Y)\otimes I_{nN}]^s \sigma^k_0(t) + \sum_{\xi=1 }^{s}\{\delta_\xi [(G-Y)\\
	& \otimes I_{nN}]^{s-\xi} 
	[(I_N-Y)\otimes I_{nN}]\mathbf{\hat{x}}^{k}_{\xi-1}(t) \},
%	\\
\end{align*}
\begin{align*}
	\beta^k_s(t) =& [(G-Y)\otimes I_{nN}]^s \beta^k_0(t) + \sum_{\xi=1 }^{s}\{\delta_\xi [(G-Y)
	\\
	& \otimes I_{nN}]^{s-\xi} 
	[(I_N-Y)\otimes I_{nN}]\mathbf{\hat{F}}^{k}_{\xi-1}(t) \},
			\\
%\end{align*}
%\begin{align*}
	%
	\gamma^k_s(t) =& [(G-Y)\otimes I_{nN}]^s \gamma^k_0(t) + \sum_{\xi=1 }^{s}\{\delta_\xi [(G-Y)\\
	&\otimes I_{nN}]^{s-\xi} 
	[(I_N-Y)\otimes I_{nN}]\mathbf{\hat{l}}^{k}_{\xi-1}(t) \}.
\end{align*}
Since the graph is connected, we have  
$ [(G-Y)\otimes I_{nN}]^s \leq c \rho^s $, where $c \ge 0 $ and $\rho \in (0,1)$.
Combined with the boundedness of $\left \|  \sigma^k_0(t)   \right \|$, $\left \|  \beta^k_0(t)   \right \|$, $\left \| \gamma^k_0(t)    \right \|$,  $\left \| \mathbf{\hat{x}}^{k}_{\xi-1}(t)    \right \|$,  $\left \|   \mathbf{\hat{F}}^{k}_{\xi-1}(t)  \right \|$,  $\left \|  \mathbf{\hat{l}}^{k}_{\xi-1}(t)   \right \|$ and $\left \|  [(I_N-Y)\otimes I_{nN}]   \right \|$, 
we arrive at
\begin{align*}
	\lim_{s \to \infty} \left \| \sigma^k_s(t) \right \| =0,\\ 
	\lim_{s \to \infty} \left \| \beta^k_s(t)  \right \| =0,\\
	\lim_{s \to \infty} \left \|  \gamma^k_s(t) \right \| =0,
\end{align*}
which directly implies that \eqref{kxij}-\eqref{klij} hold.
Then, for any agent $i$ and $j$, $ V^{k+1}_{i,s}(t,x^{k}_{i,s})$ and $ V^{k+1}_{j,s}(t,x^{k}_{j,s})$ respectively satisfy the following HJB equations:
\begin{align*} 
	\frac{\partial V^{k+1}_{i,s}(t,x^{k}_{i,s})}{\partial t} + [\frac{\partial V^{k+1}_{i,s}(t,x^{k}_{i,s})}{\partial x^{k}_{i,s}} ]'F^{k}_{i,s}(t,x^{k}_{i,s})\\
	+l^{k}_{i,s}(t,x^{k}_{i,s})=0,\\
	\frac{\partial V^{k+1}_{j,s}(t,x^{k}_{j,s})}{\partial t} +[\frac{\partial V^{k+1}_{j,s}(t,x^{k}_{j,s})}{\partial x^{k}_{j,s}} ]'F^{k}_{j,s}(t,x^{k}_{j,s}) \\+l^{k}_{j,s}(t,x^{k}_{j,s})=0.
\end{align*}
By virtue of the continuous dependence of solutions to first-order linear partial differential equations on their coefficients and terminal conditions, together with the uniform Lipschitz continuity of the system functions and the fact \eqref{kxij}-\eqref{klij}, we further conclude that \eqref{kVij} hold.

The second step is to prove the following convergence results:
\begin{align}
	\lim_{s \to \infty} \left \| \bar{x}^k_s(t) - x^{k}(t)  \right \| =0,\label{xN} 
	\\
	\lim_{s \to \infty} \left \|  \bar{F}^k_s(t) - F^{k}(t,x)  \right \| =0,\label{FN} \\
	\lim_{s \to \infty} \left \| \bar{l}^k_s(t) - l^{k}(t,x)  \right \| =0,\label{lN} \\
	\lim_{s \to \infty} \left \| \bar{V}^{k+1}_s(t) - V^{k+1}(t,x)  \right \| =0,   \label{VN} 
\end{align}
where	
\begin{align*}
	&\bar{x}^k_s(t) = \frac{1}{N}\sum_{i=1}^{N}  x^{k}_{i,s}(t), \ \ \ \ \
	\bar{F}^k_s(t) =\frac{1}{N}\sum_{i=1}^{N}  F^{k}_{i,s}(t,x^{k}_{i,s}),
%\\
\end{align*}
\begin{align*}
	&\bar{l}^k_s(t) = \frac{1}{N}\sum_{i=1}^{N}  l^{k}_{i,s}(t,x^{k}_{i,s}), 	
	\bar{V}^{k+1}_s(t) = \frac{1}{N}\sum_{i=1}^{N}  V^{k+1}_{i,s}(t,x^{k}_{i,s}).	
\end{align*}
To this end, we introduce the iterative equations
\begin{align*}
	{x}^k_s(t) = &{x}^k_{s-1}(t) +\delta_s[x_0 +  \int_{0}^{t}  f(x^{k}_{s-1}(\tau)) + g(x^{k}_{s-1}(\tau))
	\\
	&\times  u^{k}_{s-1}(\tau,x^{k}_{s-1})  d\tau - {x}^k_{s-1}(t) ],
	\\
%	\end{align*}
%	\begin{align*}
	F^{k}_{s}(t) =& F^{k}_{s-1}(t) + \delta_s[ f(x^{k}_{s}(t)) + g(x^{k}_{s}(t))u^{k}_{s}(t,x^{k}_{s}) 
	\\
	&- F^{k}_{s-1}(t) ],
		\\
%\end{align*}
%\begin{align*}
	l^{k}_{s}(t) =& l^{k}_{s-1}(t) +  \delta_s [ \frac{1}{2}(x^{k}_{s}(t))'Qx^{k}_{s}(t)+ \frac{1}{2}(u^{k}_{s}(t,x^{k}_{s}))'\\
	&\times Ru^{k}_{s}(t,x^{k}_{s})-l^{k}_{s-1}(t)],
\end{align*}
and the deviation terms $\Delta{x}^k_s(t) =\bar{x}^k_s(t)-{x}^k_s(t) $, $\Delta{F}^k_s(t) =\bar{F}^k_s(t)-{F}^k_s(t) $,  $\Delta{l}^k_s(t) =\bar{l}^k_s(t)-{l}^k_s(t) $.
Then, we derive the recursive dynamics for these deviation terms as:
\begin{align*}
	\Delta{x}^k_s(t) = (1-\delta_s)\Delta{x}^k_{s-1}(t),\\
	\Delta{F}^k_s(t) = (1-\delta_s)\Delta{F}^k_{s-1}(t),\\
	\Delta{l}^k_s(t) = (1-\delta_s)\Delta{l}^k_{s-1}(t).	
\end{align*}
Under the conditions $0< \delta_s < 1$ and $\lim_{s \to \infty} \delta_s =0$, it follows directly that the deviation sequences converge to zero, i.e.,
\begin{align*}
	\lim_{s \to \infty} \left \|\Delta{x}^k_s(t)  \right \| =0,\\
	\lim_{s \to \infty} \left \| \Delta{F}^k_s(t)  \right \| =0,\\
	\lim_{s \to \infty} \left \|\Delta{l}^k_s(t)   \right \| =0.
\end{align*}
Combining the convergence of the deviation sequences with \eqref{kxij}--\eqref{klij}, we further obtain that
\begin{align}
	\lim_{s \to \infty} \left \| x^{k}_{i,s}(t) - x^{k}_s(t)  \right \| =0,\label{xskconvege} \\ 
	\lim_{s \to \infty} \left \| F^{k}_{i,s}(t,x^{k}_{i,s}) - F^{k}_s(t)  \right \| =0,\label{Fskconvege}\\
	\lim_{s \to \infty} \left \| l^{k}_{i,s}(t,x^{k}_{i,s}) - l^{k}_s(t)  \right \| =0.\label{lskconvege}
\end{align}
Then, similar to the proof in the first step, we consider two HJB equations: one is equation \eqref{Vkis}, whose solution is given by $ V^{k+1}_{i,s}(t,x^{k}_{i,s}) $, and the other is the HJB equation
\begin{align*} 
	\frac{\partial V^{k+1}_{s}(t,x^{k}_{s})}{\partial t} + [\frac{\partial V^{k+1}_{s}(t,x^{k}_{s})}{\partial x^{k}_{s}} ]'F^{k}_{s}(t) + l^{k}_{s}(t)=0.
\end{align*} 
Based on the continuous dependence of solutions to first-order linear PDEs on their coefficients and terminal conditions, combined with the uniform Lipschitz continuity of the system dynamics and the convergence results \eqref{xskconvege}–\eqref{lskconvege}, we conclude that
\begin{align*}
	\lim_{s \to \infty} \left \| V^{k+1}_{i,s}(t,x^{k}_{i,s}) - V^{k+1}_s(t)  \right \| =0.
\end{align*}
%where $ V^{k+1}_{i,s}(t,x^{k}_{i,s}) $ is the solution to HJB \eqref{Vkis}.

In addition, using the convergence properties
\begin{align*}
	\lim_{s \to \infty} \left \| x^{k}_{s}(t) - x^{k}(t)  \right \| =0,\\
	\lim_{s \to \infty} \left \| F^{k}_{s}(t) - F^{k}(t,x)  \right \| =0,
	\\
%\end{align*}
%\begin{align*}
	\lim_{s \to \infty} \left \| l^{k}_{s}(t) - l^{k}(t,x)  \right \| =0,
	\\
%\end{align*}
%\begin{align*}
	\lim_{s \to \infty} \left \| V^{k+1}_{s}(t) - V^{k+1}(t,x)  \right \| =0,
\end{align*}
which follow from Theorem 3.1.1 in \cite{Them3.1.1}, we deduce that \eqref{xN}-\eqref{VN} hold.

Finally, by combining the above result with the convergence of the value iteration established in Theorem 1 of \cite{VI}, we arrive at the desired convergence \eqref{conx}-\eqref{conV}. 
\hfill $\blacksquare$

\bibliographystyle{automatica}

\end{document}